\documentclass[final,leqno,onefignum,onetabnum]{siamltex1213}

\usepackage{amssymb,amsmath}
\usepackage{tikz}
\usepackage{tikz-qtree}
\usepackage{pgfplotstable}
\usepackage{mathtools}
\mathtoolsset{showonlyrefs=true}
\DeclarePairedDelimiter\paren{\lparen}{\rparen}

\newtheorem{remark}{Remark}

\newenvironment{tikztree}{
    \tikzpicture[grow'=up,sibling distance=0.08cm, level distance=0.2cm, baseline=-0.02cm,
    edge from parent/.style={draw, edge from parent path={(\tikzparentnode) -- (\tikzchildnode)}}]
    \tikzstyle{every node}=[draw, circle, fill=black, inner sep=1pt, minimum size=1mm]
}{\endtikzpicture}

\newenvironment{tikzftree}{
    \tikzpicture[grow=right,sibling distance=0.08cm, level distance=0.2cm, baseline=-0.1cm,
    edge from parent/.style={draw, edge from parent path={(\tikzparentnode) -- (\tikzchildnode)}}]
    \tikzstyle{every node}=[draw, circle, fill=black, inner sep=1pt, minimum size=1mm]
}{\endtikzpicture}

% macro of trees
% 1 vertex
\newcommand{\ti}{\begin{tikztree}
\node {};
\useasboundingbox (0.1,0);
\end{tikztree}}

% 2 vertices
\newcommand{\tii}{\begin{tikztree}
\Tree[.{} [.{} ] ];
\useasboundingbox (0.1,0);
\end{tikztree}}

% 3 vertices
\newcommand{\tiii}{\begin{tikztree}[baseline]
\Tree[.{} [.{} ][.{} ] ];
\end{tikztree}}

\newcommand{\tiv}{\begin{tikztree}[baseline]
\Tree[.{} [.{} [.{} ]] ];
\useasboundingbox (0.1,0);
\end{tikztree}}

% 4 vertices
\newcommand{\tv}{\begin{tikztree}[baseline]
\Tree[.{} [.{} ] [.{} ] [.{} ] ];
\end{tikztree}}

\newcommand{\tvi}{\begin{tikztree}[baseline]
\Tree[.{} [.{} ] [.{} [.{} ]] ];
\end{tikztree}}

\newcommand{\tvii}{\begin{tikztree}[baseline]
\Tree[.{} [.{} [.{} ] [.{} ]] ];
\end{tikztree}}

\newcommand{\tviii}{\begin{tikztree}[baseline]
\Tree[.{} [.{} [.{} [.{} ]]] ];
\useasboundingbox (0.1,0);
\end{tikztree}}

% 5 vertices
\newcommand{\tix}{\begin{tikztree}[baseline]
\Tree[.{} [.{} ] [.{} ] [.{} ] [.{} ]];
\end{tikztree}}

\newcommand{\tx}{\begin{tikztree}[baseline]
\Tree[.{} [.{} ] [.{} ] [.{} [.{} ]] ];
\end{tikztree}}

\newcommand{\txi}{\begin{tikztree}[baseline]
\Tree[.{} [.{} ] [.{} [.{} ] [.{} ]] ];
\end{tikztree}}

\newcommand{\txii}{\begin{tikztree}[baseline]
\Tree[.{} [.{} ] [.{}  [.{}  [.{} ]]] ];
\end{tikztree}}

\newcommand{\txiii}{\begin{tikztree}[baseline]
\Tree[.{} [.{} [.{} ]] [.{} [.{} ]] ];
\end{tikztree}}

\newcommand{\txiv}{\begin{tikztree}[baseline]
\Tree[.{} [.{} [.{} ] [.{} ] [.{} ]]  ];
\end{tikztree}}

\newcommand{\txv}{\begin{tikztree}[baseline]
\Tree[.{} [.{} [.{} ] [.{} [.{} ]]]  ];
\end{tikztree}}

\newcommand{\txvi}{\begin{tikztree}[baseline]
\Tree[.{} [.{} [.{}  [.{} ] [.{} ]]] ];
\end{tikztree}}

\newcommand{\txvii}{\begin{tikztree}[baseline]
\Tree[.{} [.{} [.{}  [.{}  [.{} ]]]] ];
\end{tikztree}}

% 1 vertex
\newcommand{\fti}{\begin{tikzftree}
\Tree[.{} [.{} [.{} [.{} ] [.{} ]] ]];
\end{tikzftree}}

\newcommand{\calT}{\mathcal T}
\newcommand{\calF}{\mathcal F}
\newcommand{\calFT}{\mathcal{FT}}
\newcommand{\calO}{\mathcal O}

\newcommand{\rmd}{\mathrm d}
\newcommand{\bbR}{\mathbb R}

\makeatletter
\def\diag{\mathop{\operator@font diag}\nolimits}
\makeatother

\title{A characterization of energy-preserving methods and the construction
of parallel integrators for Hamiltonian systems} 

\author{
Yuto Miyatake\thanks{Department of Computational Science and Engineering,
Graduate School of Engineering, Nagoya University, Furo-cho, Chikusa-ku,
Nagoya 464-8603, Japan
(\email{miyatake@na.nuap.nagoya-u.ac.jp}).}
\and
John~C. Butcher\thanks{Department of Mathematics, University of Auckland,
Auckland 1142, New Zealand
(\email{butcher@math.auckland.ac.nz}).}
}

\begin{document}
\maketitle
\slugger{mms}{xxxx}{xx}{x}{x--x}%slugger should be set to mms, siap, sicomp, sicon, sidma, sima, simax, sinum, siopt, sisc, or sirev

\begin{abstract}
High order energy-preserving methods for Hamiltonian systems are presented.
For this aim, an energy-preserving condition of continuous stage Runge--Kutta methods
is proved.
Order conditions are simplified and parallelizable conditions are also given.
The computational cost of our high order methods is comparable to
that of the average vector field method of order two.
\end{abstract}

\begin{keywords}
energy-preservation, continuous stage Runge--Kutta methods, parallelism
\end{keywords}

\begin{AMS}
65L05, 65L06, 65Y05
\end{AMS}

\pagestyle{myheadings}
\thispagestyle{plain}
\markboth{}{}

\section{Introduction}
This paper is
concerned with the numerical integration of
Hamiltonian systems  
of the form
\begin{align}
\frac{\rmd}{\rmd t}y = f(y),\qquad y(t_0)=y_0\in\mathbb{R}^N,
\end{align}
where $f(y) = S\nabla H(y)$
with a non-singular skew-symmetric constant matrix $S$
and sufficiently differentiable Hamiltonian $H:\mathbb{R}^N\to\mathbb{R}$.
Although the dimension $N$ of the system is even, in the case of
Hamiltonian systems, we do not assume this because the results
are also applicable to Poisson systems
with constant Poisson structure, where $S$ is not necessarily non-singular and
$N$ need not be even.

Several geometric properties are known in classical mechanics. 
For example,
the exact flow is energy-preserving in the sense that
\begin{align}
\frac{\rmd}{\rmd t}H(y) = \nabla H(y)^\top \frac{\rmd y}{\rmd t}
= \nabla H(y) ^\top S \nabla H(y) = 0,
\end{align}
as well as being symplectic.
Numerical integrators exactly inheriting such properties 
are called geometric numerical integrators~\cite{ha06},
and the most remarkable advantage of adopting geometric numerical integrators
is that they often give qualitatively correct numerical solutions 
over an extremely long period of time.
Although 
integrators inheriting such geometric properties as much as possible
would be preferable,
it is known to be impossible to construct 
methods inheriting both symplecticity and energy-preservation~\cite{cfm06,zh88}.
This is because if such methods exist the numerical flow coincides with the exact flow.
In the last decades,
methods inheriting one of these properties, i.e. symplectic methods
and energy-preserving methods, have been developed. 

Although both symplectic methods and energy-preserving methods
have their own advantages,
much more attention has been devoted to symplectic methods.
One of the most remarkable advantages of symplectic methods
is that
they also nearly preserve the energy without any drift.
It should  also be
noted that
there is a symplectic condition for Runge--Kutta (RK) methods~\cite{la88,sa88,su88}:
\begin{align*}
b_ia_{ij}+b_ja_{ji}=b_ib_j, \qquad 1\leq i,j\leq s,
\end{align*}
where $s$ is the stage number, and $a_{ij}$ and $b_i$ are the coefficients of RK methods.
This condition is not only sufficient but also necessary under a certain assumption.
In fact, symplectic implicit RK methods exist whose typical examples are
Gauss methods.
For more details, we refer the reader to~\cite{ha06}.

While symplectic methods are useful for most Hamiltonian systems,
it is sometimes mandatory to adopt energy-preserving methods.
For example, errors in the Hamiltonian obtained by symplectic methods
can sometimes be fatal for chaotic dynamics. 
Thus, it is strongly hoped that the study on energy-preserving methods
reaches to the same maturity as symplectic methods.
However, the study on energy-preserving methods is more challenging
and, in fact, newer than that on symplectic methods.
One reason is that no RK method is energy-preserving
for general Hamiltonian systems~\cite{ce09}.

The projection method is a relatively simple method of
constructing energy-preserving integrators.
The idea is very simple, but the implementation and
the construction of the projection is specific to individual problems.
An even more serious disadvantage of projection-based integrators is that
the long term behaviour  usually deteriorates.  Compared with the projection method,
the discrete gradient method, which was first proposed by Gonzalez~\cite{go96}
(see also McLachlan et al.~\cite{mc99}),
is rather a more sophisticated and systematic approach.

After some pioneering studies,
substantial progress was made in the last decade.
From a theoretical viewpoint,
Faou--Hairer--Pham~\cite{fhp04} and Chartier--Faou--Murua~\cite{cfm06} 
revealed that
there exist energy-preserving B-series integrators
other than the exact flow
by showing an energy-preserving condition.
Furthermore,
concrete energy-preserving methods have also been developed,
Quispel--McLaren proposed the average vector field (AVF) method~\cite{qu08}.
This method is energy-preserving, a subclass of the discrete gradient method,
a B-series method
and of order two.
A more recent interest is to find a high order energy-preserving method
as an extension of the AVF method.
Hairer proposed the AVF collocation method~\cite{ha10},
and Brugnano et al. proposed the Hamiltonian boundary value method~\cite{bru10}.
These methods are essentially the same methods,
although the latter method has been developed mainly for polynomial 
Hamiltonian systems.
These methods are based on so called continuous stage RK (CSRK) methods.
It should be noted that
roughly speaking
the computational cost of the AVF collocation method
is almost the same as the same order Gauss method. 

The main aim of this paper is to construct a more efficient 
energy-preserving method than exists at present.
To achieve this aim we focus our attention on a matrix $M$ which
characterizes CSRK methods. 
In terms of $M$,  we consider (i) an energy preserving condition,
(ii) order conditions and (iii) criteria for parallel implementation using real arithmetic.
We then put together these three perspectives to construct high order
energy-preserving methods which can be computed in parallel.
We then derive concrete integrators of up to order six.

The paper is organized as follows.
In \autoref{sec2}, some existing energy-preserving methods are briefly reviewed
with the introduction of CSRK methods.
In \autoref{sec3}, 
an energy-preserving condition of CSRK methods is discussed.
In addition to the sufficient condition already given in \cite{mi14}, 
we show that the condition is also necessary under a small assumption.
The condition is expressed in terms of the matrix $M$.
In \autoref{sec4},
order conditions of energy-preserving CSRK methods are characterized
in terms of the matrix, where simplifying assumptions play an important role.
In \autoref{sec5},
parallel energy-preserving methods are constructed.
We first consider a parallelizable condition of CSRK methods,
and then construct parallel energy-preserving methods.
Furthermore, we investigate fourth-order methods in detail.
The derived fourth order methods can be implemented faster
than the same order AVF collocation method,
but an actual error of our methods per each time step becomes bigger
than that of the existing method.
We therefore have to discuss to what extent our method is efficient.
Finally, concluding remarks are given in \autoref{sec6}.

\section{Energy-preserving continuous stage Runge--Kutta methods}
\label{sec2}
In this section,
we review some existing energy-preserving methods
and continuous stage Runge--Kutta methods.

The discrete gradient method proposed by Gonzalez \cite{go96}
(see also McLachlan et al.~\cite{mc99})
is a first systematic method for deriving energy-preserving integrators.
The key idea of the method is to introduce a sophisticated approximation of
the gradient.
We first define a discrete gradient,
a map $\overline{\nabla}H: \bbR^N\times\bbR^N \to \bbR^N$, satisfying
\begin{align}
& H(x) - H(y) = \overline{\nabla}H(x,y)^\top (x-y), \label{dg1} \\
& \overline{\nabla} H(x,x) = H(x) \label{dg2}
\end{align}
for any $x,y\in\bbR^N$,
and then define an integrator by
\begin{align}\label{dgmethod}
\frac{y_1-y_0}{h} = S \overline{\nabla} H(y_1,y_0),
\end{align}
where $h$ denotes the stepsize.
Here, the condition \eqref{dg1} corresponding to the chain rule
$\frac{\rmd}{\rmd t} H(x) = \nabla H(x)^\top \dot{x}$
is called the discrete chain rule,
and the condition \eqref{dg2} requires the consistency.
Due to the discrete chain rule and skew-symmetry of the matrix $S$,
the energy-preservation for the discrete gradient method \eqref{dgmethod}
can be easily verified:
\begin{align*}
\frac{1}{h}\paren*{H(y_1)-H(y_0)} 
= \overline{\nabla}H(y_1,y_0)^\top \frac{y_1-y_0}{h}
= \overline{\nabla}H(y_1,y_0)^\top S \overline{\nabla}H(y_1,y_0) = 0.
\end{align*}
The discrete gradient is not generally unique
and several approaches have been proposed.
The average vector field (AVF) gradient proposed by Quispel--McLaren~\cite{qu08}
\begin{align*}
\overline{\nabla}H(y_1,y_0) = \int_0^1 \nabla H(\tau y_0 + (1-\tau)y_1) \, \rmd \tau
\end{align*}
is one of several possible approaches.
In general, the AVF method reads
\begin{align}\label{avf}
y_1 = y_0 + h\int_0^1 f(\tau y_0 + (1-\tau)y_1) \, \rmd \tau.
\end{align}
The AVF method is a symmetric, second order and furthermore B-series method
as will be shown later.

After the proposition of the AVF method,
the AVF method was extended to higher orders.
Hairer extended the AVF method
by slightly modifying the idea of the standard collocation methods~\cite{ha10}
(see also the Hamiltonian boundary value method~\cite{bru10}).
In this paper, we call the extended method the AVF collocation method.
As is the case with the standard collocation method which can be interpreted as
a RK method,
the AVF collocation method can be interpreted as
a continuous stage Runge--Kutta (CSRK) method.

\begin{definition}[Continuous stage Runge--Kutta methods]
Let $A_{\tau,\zeta}$ be a polynomial in $\tau$ and $\zeta$.
Assume that $A_{0,\zeta}=0$.
We denote by $s$ the polynomial degree of $A_{\tau,\zeta}$ in $\tau$.
Let $B_\zeta$ be defined by $B_\zeta = A_{1,\zeta}$.
We search for an $s$-degree polynomial $Y_\tau$
($\tau\in[0,1]$) and $y_1$
such that they satisfy
\begin{align}
Y_\tau &= y_0 + h\int_0^1 A_{\tau,\zeta} f(Y_\zeta )\, \rmd \zeta, \label{CSRK1}\\
y_1 &= y_0 + h\int_0^1 B_\tau f(Y_\tau ) \,\rmd \tau . \label{CSRK2}
\end{align}
A one-step method $y_0\mapsto y_1$ is called
an $s$-degree continuous stage Runge--Kutta (CSRK) method. 
A CSRK method is said to be consistent if $\int_0^1 B_\zeta \,\rmd\zeta=1$.
\end{definition}

\begin{remark}
CSRK methods were originally due to Butcher~\cite{bu72} (see also~\cite{bu08}),
where $A_{\tau,\zeta}$ was not assumed to be a polynomial but to be bounded.
In this paper, we focus on a polynomial $A_{\tau,\zeta}$
because this restriction simplifies discussion below and more general functions
do not seem to be of greater benefit to our aim than polynomial functions.
\end{remark}

In the above definition,
$A_{0,\zeta}=0$ is assumed so that $Y_0=y_0$.
$Y_\tau$ can be seen as an approximation of $y(t_0+C_\tau h)$,
where $C_\tau = \int_0^1 A_{\tau,\zeta}\,\rmd\zeta$.
The final stage $y_1$ is also expressed by $y_1 = Y_1$ due to
the relation $B_\zeta = A_{1,\zeta}$.
Since in this paper we shall focus on $A_{\tau,\zeta}$
which is polynomial of degree $s$ in $\tau$ and $s-1$ in $\zeta$,
we often express $A_{\tau,\zeta}$ as
\begin{align}
A_{\tau,\zeta} =
\begin{bmatrix}
\tau & \frac{\tau^2}{2} & \cdots & \frac{\tau^s}{s}
\end{bmatrix}
M
\begin{bmatrix}
1 \\ \zeta \\ \vdots \\ \zeta^{s-1}
\end{bmatrix} \label{csrkm}
\end{align}
with a constant matrix $M\in\mathbb{R}^{s\times s}$.

In the case of the AVF collocation method,
the polynomial $A_{\tau,\zeta}$ is given by
\begin{align}
A_{\tau,\zeta} = \sum_{i=1}^s \frac{1}{b_i} \int_0^\tau L_i(\alpha)\,\rmd \alpha
L_i(\zeta),
\end{align}
where
\begin{align}
L_i (\tau) = \prod_{j=1,\ j\neq i}^s \frac{\tau-c_j}{c_i-c_j}, \qquad
b_i = \int_0^1 L_i (\tau) \,\rmd \tau
\end{align}
and $c_1,\dots,c_s$ are distinct, real numbers.
It was proved in~\cite{ha10} that the AVF collocation method is energy-preserving
for Hamiltonian systems
independently of the $c_i$ values.
If the $c_i$ values are the zeros of the $s$-th shifted Legendre polynomial,
then the method has order $p=2s$.
Here lists concrete expressions of $A_{\tau,\zeta}$ for $s=1,2,3$:
\begin{alignat}{2}
s=1: &\qquad A_{\tau,\zeta}= \tau, \\
s=2: &\qquad  A_{\tau,\zeta} = \tau \paren*{(4-3\tau)-6(1-\tau)\zeta}, \\
s=3: &\qquad  A_{\tau,\zeta} =
\tau \paren*{(9-18\tau +10\tau^2) - 12(3-8\tau+5\tau^2)\zeta 
+ 30(1-3\tau+2\tau^2)\zeta^2 }.
\end{alignat}
Note that in the case of $s=1$ and $A_{\tau,\zeta}=\tau$,
the corresponding CSRK method coincides with the AVF method \eqref{avf}.

The following theorem ensures the unique solvability of CSRK methods.
%To our knowledge, this property has not been proved in an explicit fashion yet,
%but can be proved in a similar way to the proof of the unique solvability of RK methods
%(see, e.g.~\cite{bu08,hw96}).

\begin{theorem}[Unique solvability~\cite{ta12}]
Suppose that $f$ satisfies a Lipschitz condition
\begin{align*}
\| f(\eta) - f(\overline{\eta}) \| \leq L\| \eta - \overline{\eta}\|
\end{align*}
with a constant $L$.
If 
\begin{align*}
h < \frac{1}{L\paren*{\max_{\tau\in [0,1]}\int_0^1|A_{\tau,\zeta}|\,\rmd\zeta} } ,
\end{align*}
there exists a unique solution of \eqref{CSRK1}.
\end{theorem}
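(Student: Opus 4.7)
The plan is to prove the theorem via the Banach fixed point theorem applied to the integral operator naturally associated with \eqref{CSRK1}. Because $A_{\tau,\zeta}$ is polynomial of degree $s$ in $\tau$, the right hand side of \eqref{CSRK1} is automatically a polynomial of degree $s$ in $\tau$ for any integrable $Y_\zeta$, so the degree-$s$ constraint on $Y_\tau$ is preserved by the iteration and does not need to be tracked separately.

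Concretely, I would introduce the Banach space $X = C([0,1];\mathbb{R}^N)$ equipped with the sup-norm $\|Y\|_\infty = \max_{\tau\in[0,1]} \|Y_\tau\|$, and define the operator $T:X\to X$ by
\begin{align*}
(TY)_\tau = y_0 + h\int_0^1 A_{\tau,\zeta} f(Y_\zeta)\,\rmd\zeta .
\end{align*}
The continuity of $\tau\mapsto(TY)_\tau$ is immediate from the polynomial dependence of $A_{\tau,\zeta}$ on $\tau$. The key estimate is obtained by subtracting two iterates: for $Y,\widetilde Y\in X$,
\begin{align*}
\|(TY)_\tau - (T\widetilde Y)_\tau\|
&\le h\int_0^1 |A_{\tau,\zeta}|\, \|f(Y_\zeta)-f(\widetilde Y_\zeta)\|\,\rmd\zeta \\
&\le hL\paren*{\int_0^1 |A_{\tau,\zeta}|\,\rmd\zeta}\|Y-\widetilde Y\|_\infty ,
\end{align*}
where the Lipschitz assumption on $f$ is used in the second inequality. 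Taking the maximum over $\tau\in[0,1]$ gives $\|TY-T\widetilde Y\|_\infty \le \kappa\|Y-\widetilde Y\|_\infty$ with $\kappa = hL\max_{\tau\in[0,1]}\int_0^1|A_{\tau,\zeta}|\,\rmd\zeta$. The stated step-size restriction is exactly $\kappa < 1$, so $T$ is a strict contraction on the complete space $X$, and Banach's fixed point theorem furnishes a unique $Y\in X$ with $TY = Y$, i.e.\ a unique solution of \eqref{CSRK1}.

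Finally I would remark that this fixed point is automatically a polynomial of degree at most $s$ in $\tau$: since $Y = TY$ and $(TY)_\tau$ is polynomial in $\tau$ of degree $s$ by the form of $A_{\tau,\zeta}$, the solution lies in the right class prescribed by the definition of CSRK methods. There is no real obstacle in this argument; the only modest care needed is to notice that uniqueness in the larger space $C([0,1];\mathbb{R}^N)$ implies uniqueness among degree-$s$ polynomials, so no separate argument on a finite-dimensional subspace is required.
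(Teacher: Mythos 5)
Your proposal is correct and is essentially the paper's own argument: both apply the contraction mapping theorem to the map $Y \mapsto y_0 + h\int_0^1 A_{\tau,\zeta} f(Y_\zeta)\,\rmd\zeta$ under the sup-norm, with the identical Lipschitz estimate yielding the contraction constant $hL\max_{\tau\in[0,1]}\int_0^1|A_{\tau,\zeta}|\,\rmd\zeta$. The only (immaterial) difference is that the paper works directly on the finite-dimensional space of degree-$s$ polynomials with $Y_0=y_0$, identified with $\bbR^{sN}$, whereas you contract on all of $C([0,1];\bbR^N)$ and then observe the fixed point is automatically a polynomial of degree $s$; both routes are valid.
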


This theorem can be proved in a similar way to the proof of the unique solvability of RK methods
(see, e.g.~\cite{bu08,hw96}) as mentioned in~\cite{ta12}.
For the convenience of the reader, this proof is included below.

\begin{proof}
First, we note that the function space
of $s$-degree polynomials with the property $Y_0=y_0$
can be identified with $\bbR^{sN}$.
We define a metric on $\bbR^{sN}$ by
\begin{align*}
\rho (Y_\tau, \overline{Y}_\tau ) = \max_{\tau\in[0,1]} \| Y_\tau - \overline{Y}_\tau \|,
\end{align*}
and consider the map
\begin{align*}
Y_\tau \mapsto \phi (Y_\tau) = y_0 + h\int_0^1 A_{\tau,\zeta} f(Y_\zeta)\,\rmd\zeta.
\end{align*}
Then we estimate $\rho (\phi (Y_\tau),\phi (\overline{Y}_\tau))$ as follows:
\begin{align*}
\rho (\phi (Y_\tau),\phi (\overline{Y}_\tau))
&=
h\max_{\tau\in[0,1]} \left\| \int_0^1 A_{\tau,\zeta}
\paren*{f(Y_\zeta)-f(\overline{Y}_\zeta)}\,\rmd\zeta \right\| \\
&\leq
h\max_{\tau\in[0,1]}  \int_0^1 |A_{\tau,\zeta}|
\left\| f(Y_\zeta)-f(\overline{Y}_\zeta)\right\|\,\rmd\zeta \\
&\leq
hL\max_{\tau\in[0,1]}  \int_0^1 |A_{\tau,\zeta}|
\left\| Y_\zeta-\overline{Y}_\zeta\right\|\,\rmd\zeta \\
&\leq
hL \paren*{\max_{\tau\in[0,1]}  \int_0^1 |A_{\tau,\zeta}|\,\rmd\zeta}
\max_{\zeta\in[0,1]} \left\| Y_\zeta-\overline{Y}_\zeta\right\| \\
&=
hL \paren*{\max_{\tau\in[0,1]}  \int_0^1 |A_{\tau,\zeta}|\,\rmd\zeta}
\rho (Y_\tau,\overline{Y}_\tau ).
\end{align*}
If the stepsize $h$ satisfies $hL \paren*{\max_{\tau\in[0,1]}  \int_0^1 |A_{\tau,\zeta}|\,\rmd\zeta}<1$,
the contraction mapping theorem ensures
the existence and uniqueness of the fixed point of the map $\phi$.
\end{proof}

As pointed out in \cite{ha10},
the numerical solution of a CSRK method
can be interpreted as a B-series.
Below we introduce some notation to be needed in the subsequent sections.
See~\cite{bu08,ha06} for more details on B-series.

Let $\calT$ be the set of rooted trees
\begin{align}
\calT=\{\ti, \tii, \tiii, \tiv, \tv, \tvi, \tvii, \tviii, \dots  \}.
\end{align}
We denote by $|t|$ the number of vertices of a tree $t$.
A new tree obtained by connecting the roots of $t_1,\dots,t_m\in\calT$
is denoted by $t=[t_1,\dots,t_m]$.
If some of $t_1,\dots,t_m$ equal to each other,
we write $t=[t_1^{r_1},\dots,t_m^{r_m}]$,
e.g. $[\ti,\tii,\tii] = [\ti , \tii^2]$.
The symmetry coefficient $\sigma:\calT\to\bbR$ is defined recursively by
\begin{align*}
\sigma (\ti) = 1, \qquad
\sigma (t) = \prod_{k=1}^m r_k ! \sigma (t_k)^{r_k},
\end{align*}
where $t=[t_1^{r_1},\dots,t_m^{r_m}]$.
The elementary differential is a mapping $F(t):\bbR^N \to \bbR^N$,
defined recursively by
\begin{align*}
F(\ti) (y) = f(y), \qquad
F(t) (y) = f^{(m)} \paren*{F(t_1)(y),\dots,F(t_m)(y)}.
\end{align*}
For a map $a:\calT\cup\{ \emptyset\} \to\bbR$, a formal series of the form
\begin{align*}
B(a,y) = a(\emptyset) y + \sum_{t\in \calT}\frac{h^{|t |}}{\sigma (t)}
a(t)F(t)(y)
\end{align*}
is called a B-series.
The exact time-$h$ flow of a differential equation $\dot{y}=f(y)$
can be expressed as $y(t_0+h) = B(e,y(t_0))$,
where the coefficients $e$ are given by
\begin{align*}
e(\emptyset) = e(\ti) = 1, \qquad
e(t) = \frac{1}{|t|} e(t_1)\cdots e(t_m).
\end{align*}
As is the case with Runge--Kutta methods,
every CSRK method is a B-series integrator
$y_1 = B(\phi,y_0)$,
where the coefficients $\phi$, called the elementary weights,
are given by $\phi(\emptyset)=\phi(\ti)=1$, $\phi_\tau (\ti)=C_\tau$
and
\begin{align*}
\phi_\tau (t) = \int_0^1 A_{\tau,\zeta}
\phi_\zeta (t_1)\cdots \phi_\zeta (t_m) \,\rmd\zeta, \qquad
\phi (t) = \int_0^1 B_\tau \phi_\tau(t_1)\cdots \phi_\tau (t_m)\,\rmd\tau.
\end{align*}
Here are the first four elementary weights:
\begin{alignat}{2}
\phi (\ti ) &= \int_0^1 B_\tau \,\rmd\tau,
& \qquad \phi (\tii ) &= \int_0^1 B_\tau A_{\tau,\zeta} \,\rmd\tau\rmd\zeta, \\ 
\phi (\tiii ) &= \int_0^1 B_\tau A_{\tau,\zeta} A_{\tau,\kappa} \,
\rmd\tau\rmd\zeta\rmd\kappa,
& \qquad \phi (\tiv ) &= \int_0^1 B_\tau A_{\tau,\zeta} A_{\zeta,\kappa} \,
\rmd\tau\rmd\zeta\rmd\kappa.
\end{alignat}

In addition to the set of trees $\calT$,
we define a set of forests $\calF$ and a set of free trees $\calFT$.
The forest is an unordered finite collection of trees from $\calT$:
\begin{align*}
\calF=\{
\ti, \ti \ti, \tii, \ti \ti \ti, \ti \tii, \tiii, \tiv, \dots
\}.
\end{align*}
An element of $t\in\cal F$ is written as $t = t_1^{r_1}\cdots t_p^{r_p}$ 
($t_1,\dots,t_p\in \calT$),
and the sum of the vertices are denoted by $|t|$, 
e.g. $|\ti\tii^2| = |\ti \tii\tii|=5$.
For $t\in \calT$, $B_-(t)$ denote the forest consisting of the subtrees of $t$,
e.g. $B_-(\tx) = \ti^2\tii$.
For a forest $t = t_1^{r_1}\cdots t_p^{r_p}$,
we define a map $a:\calF \to \bbR$ by
$a(t) = a(t_1)^{r_1} \cdots a(t_p)^{r_p}$.

Next, we introduce the set of free trees $\calFT$.
Note that there is a group of trees whose elements have topologically the same shapes,
e.g. the shapes of $\tx$, $\txi$, $\txv$ and $\txvi$ are the same.
We define the set $\calFT$
by using the Butcher product, 
for two trees $u=[u_1,\dots,u_q]\in\calT$ and $v\in\calT$,
\begin{align*}
u\circ v := [u_1,\dots,u_q,v] \in \calT.
\end{align*}
Note that $u\circ v \neq v \circ u$ in general.
A tree $u\circ v$ can be obtained by shifting the root of $v\circ u$.
Multiple shift of root induces an equivalence.
All trees in the same equivalent class has the same number of vertices,
and have topologically the same shape.
We call each equivalent tree a free tree
and denote the set of free trees by $\cal{FT}$.
The canonical projection is denoted by $\pi: \calT \to \calFT$.
This map is not bijective,
e.g. $\pi^{-1}(\fti) = \{ \tx,\txi,\txv,\txvi \} $.
For two equivalent trees $u$ and $v$, we define a distance $\kappa (u,v)$
by the minimum number of root shifts necessary to obtain $u$ from $v$.
For example, $\kappa (u,u)=0$, $\kappa(u,v)=\kappa(v,u)$ and $\kappa (\txv,\txvi) = 3$.
Any free tree containing a member with a factorisation of the form $u\circ u$
is called superfluous tree.

\section{Necessary and Sufficient condition for energy-preservation}
\label{sec3}
In order to construct efficient energy-preserving CSRK methods,
it is mandatory to characterize an energy-preserving condition for CSRK methods.
The sufficient condition was already given in \cite{mi14}.
Although one can proceed to the subsequent sections by using the condition,
we also show that the sufficient condition is also necessary 
under a small assumption. 
The necessity indicates that it is almost impossible to find
energy-preserving CSRK methods outside the framework presented in this paper.

\subsection{Sufficient condition}

\begin{theorem}[\cite{mi14}] \label{suffcond}
A CSRK method is energy-preserving, if
the matrix $M$ in \eqref{csrkm} is symmetric, i.e.
if $\frac{\partial}{\partial\tau}A_{\tau,\zeta}$
is symmetric.
\end{theorem}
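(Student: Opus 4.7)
The plan is to exploit the fact that $Y_\tau$ is a smooth (polynomial) curve connecting $y_0 = Y_0$ to $y_1 = Y_1$ (the latter equality holding because $B_\tau = A_{1,\tau}$), so that the change in the Hamiltonian is a single line integral along this curve.

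First I would write
\begin{align*}
H(y_1) - H(y_0) = H(Y_1) - H(Y_0) = \int_0^1 \nabla H(Y_\tau)^\top \dot Y_\tau \, \rmd\tau,
\end{align*}
and then differentiate the CSRK relation \eqref{CSRK1} with respect to $\tau$ to obtain
\begin{align*}
\dot Y_\tau = h \int_0^1 \frac{\partial A_{\tau,\zeta}}{\partial \tau} f(Y_\zeta) \, \rmd\zeta.
\end{align*}
Substituting this in and using $f(y) = S\nabla H(y)$ yields a double integral
\begin{align*}
H(y_1) - H(y_0) = h \int_0^1\!\!\int_0^1 \frac{\partial A_{\tau,\zeta}}{\partial \tau}\, \nabla H(Y_\tau)^\top S\, \nabla H(Y_\zeta) \, \rmd\zeta\,\rmd\tau.
\end{align*}

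The key step is then to symmetrise. Renaming $\tau \leftrightarrow \zeta$ in the double integral and using the assumed symmetry $\partial_\tau A_{\tau,\zeta} = \partial_\tau A_{\zeta,\tau}$ (which is exactly $M = M^\top$ when one reads off \eqref{csrkm}, since $\partial_\tau A_{\tau,\zeta} = [1,\tau,\dots,\tau^{s-1}] M [1,\zeta,\dots,\zeta^{s-1}]^\top$) together with the skew-symmetry $S^\top = -S$, the integrand is turned into its own negative. Averaging the two representations of the same integral then gives $H(y_1) - H(y_0) = 0$.

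I expect no serious obstacle: the symmetry of $M$ is explicitly designed so that the only non-trivial bilinear form appearing in the differential of $H$ along $Y_\tau$ is of the form $g(\tau,\zeta)\,\nabla H(Y_\tau)^\top S\, \nabla H(Y_\zeta)$ with $g$ symmetric, which is automatically annihilated by the skew matrix $S$. The only minor care needed is to check the identification $y_1 = Y_1$ and to ensure that $Y_\tau$ is smooth enough on $[0,1]$ for the fundamental theorem of calculus — both are immediate since $A_{\tau,\zeta}$ is polynomial and $B_\tau = A_{1,\tau}$ by definition.
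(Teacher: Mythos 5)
Your proof is correct and follows essentially the same route as the paper: both express $H(y_1)-H(y_0)=\int_0^1 \nabla H(Y_\tau)^\top \dot Y_\tau\,\rmd\tau$ (using $y_1=Y_1$, $y_0=Y_0$), substitute $\dot Y_\tau = h\int_0^1 \frac{\partial}{\partial\tau}A_{\tau,\zeta}\,S\nabla H(Y_\zeta)\,\rmd\zeta$, and annihilate the resulting double integral by combining the symmetry of the kernel $\frac{\partial}{\partial\tau}A_{\tau,\zeta}=\varphi(\tau)^\top M\varphi(\zeta)$ with the skew-symmetry of $S$. The only difference is the final step: the paper diagonalizes $M=P^\top\Lambda P$ and writes the integral as $\sum_i\lambda_i v_i^\top S^\top v_i=0$, whereas you swap $\tau\leftrightarrow\zeta$ to show the integral equals its own negative — a marginally more elementary finish reaching the same conclusion.
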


For readers' convenience, we here show the proof of this theorem.
\begin{proof}
If the matrix $M$ is symmetric,
it can be diagonalized by an orthogonal matrix $P$: $M = P^\top \Lambda P$,
where $\Lambda$ is a diagonal matrix 
whose elements are given by $\Lambda_{ii}=\lambda_i$.
The partial derivative $\frac{\partial}{\partial\tau}A_{\tau,\zeta}$ 
is expressed as
\begin{align*}
\frac{\partial}{\partial \tau} A_{\tau,\zeta}
=
\sum_{i=1}^s \lambda_i \paren*{P\varphi (\tau)}_i \paren*{P\varphi (\zeta)}_i ,
\end{align*}
where $\varphi (\tau) = [1,\tau,\dots,\tau^{s-1}]^\top$ and $(\cdot)_i$
denotes the $i$-th component.
By using these notation,
we have
\begin{align*}
H(y_1)-H(y_0)
&=
\int_0^1 \frac{\rmd}{\rmd \tau} H(Y_\tau)\,\rmd\tau
=\int_0^1 \dot{Y}_\tau^\top \nabla H (Y_\tau)\,\rmd\tau \\
&=
h\int_0^1 \paren*{\int_0^1 \frac{\partial}{\partial \tau}
A_{\tau,\zeta} S\nabla H(Y_\zeta)\,\rmd\zeta} ^\top \nabla H (Y_\tau)\,\rmd\tau \\
&=
h\sum_{i=1}^s \lambda_i
\paren*{\int_0^1 (P\varphi (\zeta))_i \nabla H(Y_\zeta)\,\rmd\zeta}^\top S^\top
\paren*{\int_0^1 (P\varphi (\tau))_i \nabla H(Y_\tau)\,\rmd\tau} \\
&=0.
\end{align*}
The last equality follows from the skew-symmetry of $S$.
\end{proof}

\begin{remark}
Tang and Sun have also provided essentially the same sufficient condition in a different manner~\cite{ta14}. 
%Tang--Sun~\cite{ta14} showed that 
%a CSRK method whose coefficient polynomial is given by
%\begin{align}
%A_{\tau,\zeta} = \sum_{i=0}^\infty \omega_i
%\int_0^\tau P_i (x)\,\rmd x P_i(\zeta)
%\label{tscond}
%\end{align}
%is energy-preserving,
%where the $P_i$ functions denote the shifted Legendre polynomials
%defined by
%\begin{align*}
%P_0 (x) = 1, \qquad
%P_i(x) = \frac{\sqrt{2i+1}}{i!} \frac{\rmd ^i}{\rmd x^i}
%\paren*{x^i(x-1)^i}, \qquad
%i=1,2,3,\dots .
%\end{align*}  
%It is clear that when $\omega_i=0$ ($i\geq s$),
%$\frac{\partial}{\partial\tau}A_{\tau,\zeta}$ for \eqref{tscond}
%is symmetric.
%However, 
%while the {\color{red}symmetry} of $M$ means there are $s(s+1)/2$ degrees of freedom,
%there are only $s$ degrees of freedom in the expression \eqref{tscond}.
%Thus, \autoref{suffcond} expresses a wider class of polynomials than \eqref{tscond}
%for $s$-degree CSRK methods.
\end{remark}

\subsection{Necessary condition}
The idea of proving the necessity is to use
a necessary and sufficient condition of energy-preserving B-series
methods shown by Chartier et al.~\cite{cfm06}.
We will also use some lemmas which are motivated by Celledoni et al.~\cite{cos14}.

The necessary and sufficient condition of energy-preserving B-series methods
is summarized in the following theorem.
\begin{theorem}[\cite{cfm06}]
A B-series $B(a,\cdot)$ is energy-preserving if and only if
\begin{align}\label{eq:cfmcond}
\sum_{u\in\pi^{-1}(\bar{t})}
\frac{(-1)^{\kappa(t,u)}}{\sigma(u)}
a(B_-(u))=0,
\qquad
\text{for all }
\bar{t}\in\calFT,
\end{align}
where $t$ is a member of $\pi^{-1}(\bar{t})$.
\end{theorem}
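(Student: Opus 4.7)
The plan is to expand $H(B(a,y_0)) - H(y_0)$ as a formal power series in $h$, rewrite each coefficient using the Hamiltonian structure $f=S\nabla H$, and show that the condition \eqref{eq:cfmcond} is equivalent to the vanishing of every coefficient. First I would Taylor-expand $H$ at $y_0$ and regroup the resulting terms by rooted trees, obtaining a formal identity of the form
\[
H(y_1) - H(y_0) = \sum_{t\in\calT} \frac{h^{|t|}}{\sigma(t)}\, a(B_-(t))\, \nabla H(y_0)^\top F(t)(y_0),
\]
where only the forest $B_-(t)$ of subtrees appears because the ``root slot'' of $t$ is now occupied by the gradient $\nabla H$. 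This manipulation, essentially standard in the B-series literature, reduces the problem to controlling the expression $\nabla H^\top F(t)$ for $t\in\calT$.

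Next I would exploit the skew-symmetry $S^\top=-S$. With $f=S\nabla H$, every edge of $t$ contributes a factor of $S$ and every vertex a symmetric tensor of derivatives of $H$; reparenting the root along one edge transposes the associated $S$, producing a sign flip. Iterating, if $u$ is obtained from $t$ by $\kappa(t,u)$ successive root shifts, then $\nabla H^\top F(u) = (-1)^{\kappa(t,u)} \nabla H^\top F(t)$ once the symmetry of the derivative tensors of $H$ is taken into account. Consequently, for each free tree $\bar t\in\calFT$ and any choice of representative $t\in\pi^{-1}(\bar t)$, the aggregate coefficient of $\nabla H^\top F(t)$ at order $h^{|t|}$ in $H(y_1)-H(y_0)$ is precisely
\[
\sum_{u\in\pi^{-1}(\bar t)} \frac{(-1)^{\kappa(t,u)}}{\sigma(u)}\, a(B_-(u)).
\]
Sufficiency then follows at once: if \eqref{eq:cfmcond} holds for every $\bar t$, each coefficient vanishes and $H(y_1)=H(y_0)$ identically. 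For necessity, I would argue that as $H$ and $S$ range over smooth Hamiltonians and non-singular skew-symmetric matrices, the differentials $\nabla H^\top F(t)(y_0)$ are linearly independent modulo exactly the sign relations induced by $S^\top=-S$; hence the vanishing of $H(y_1)-H(y_0)$ for all such choices forces \eqref{eq:cfmcond} free tree by free tree.

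The main obstacle is twofold. First, one must check that the exponent $\kappa(t,u)$ gives a well-defined sign, i.e.\ that any closed sequence of root shifts uses an even number of edge transpositions of $S$; this is a purely combinatorial statement about the action of root-shifts on $\pi^{-1}(\bar t)$. Second, and more delicate, is the linear independence step in the proof of necessity: one needs a generic-Hamiltonian / generic-$S$ argument ensuring that no accidental cancellations occur between differentials attached to distinct free trees, so that each $\bar t$ yields an independent equation on $a$. Once these two points are settled, the equivalence is a direct comparison of coefficients.
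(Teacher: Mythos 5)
The paper offers no proof of this theorem: it is imported verbatim from \cite{cfm06}, so your proposal can only be measured against the original argument of Chartier--Faou--Murua. You have correctly identified its overall strategy (expand $H(B(a,y_0))-H(y_0)$ tree by tree, use skew-symmetry of $S$ to relate the terms attached to root-shift-equivalent trees by a sign, and prove a linear-independence statement for necessity), but the central identity you write down is incorrect, and the error propagates into your sign lemma. The term of the Taylor expansion of $H(y_1)-H(y_0)$ attached to a rooted tree $u=[t_1,\dots,t_m]$ is the \emph{elementary Hamiltonian} $H[u]:=\nabla^m H\bigl(F(t_1),\dots,F(t_m)\bigr)$, i.e.\ the $m$-th derivative of $H$ evaluated on the elementary differentials of the subtrees; the correct expansion is
\[
H(y_1)-H(y_0)=\sum_{|u|\ge 2}\frac{h^{|u|-1}}{\sigma(u)}\,a(B_-(u))\,H[u](y_0).
\]
This is \emph{not} $\nabla H^\top F(u)$: since $f^{(m)}=S\,\nabla^{m+1}H$, one has $\nabla H^\top F(u)=-H[u\circ\bullet]$, the elementary Hamiltonian of a strictly larger tree. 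Your formula therefore pairs $a(B_-(t))$ with the wrong differential and omits terms outright: at order $h^2$ your series contains only $a(\emptyset)\,\nabla H^\top f'f$ (for the two-vertex tree), whereas the true expansion contains $a([\bullet])\,\nabla H^\top f'f+\tfrac12 a(\bullet)^2\,\nabla^2H(f,f)$; the second contribution, coming from the quadratic Taylor term of $H$, has no home in your indexing, so the concluding ``direct comparison of coefficients'' cannot produce \eqref{eq:cfmcond}.

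The sign relation also fails for the quantity you chose. For the two rooted representatives $[\bullet,\bullet]$ and $[[\bullet]]$ of the $3$-vertex path, writing $G=\nabla^2H$, one finds $\nabla H^\top F([[\bullet]])=-f^\top G S G f=0$ because $GSG$ is skew-symmetric, while $\nabla H^\top F([\bullet,\bullet])=-\nabla^3H(f,f,f)$ is generically nonzero; so $\nabla H^\top F(u)\neq(-1)^{\kappa(t,u)}\nabla H^\top F(t)$. The relation that a single root shift together with $S^\top=-S$ and the symmetry of the tensors $\nabla^mH$ actually yields is $H[u]=(-1)^{\kappa(t,u)}H[t]$ for the elementary Hamiltonians defined above (check: $H[[[\bullet]]]=\nabla H^\top SGf=-f^\top Gf=-H[[\bullet,\bullet]]$). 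If you replace $\nabla H^\top F(t)$ by $H[t]$ and $h^{|t|}$ by $h^{|t|-1}$ throughout, your outline becomes the proof in \cite{cfm06}. Even then two points remain open in your sketch: the linear independence of the elementary Hamiltonians attached to distinct non-superfluous free trees, which is the technical heart of \cite{cfm06} and cannot be dismissed with a genericity remark; and the fact that for superfluous free trees $H[u]\equiv 0$, so that no condition arises there --- without this caveat, applying \eqref{eq:cfmcond} to the two-vertex free tree would force $a(\bullet)=0$, contradicting consistency.
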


As corollaries, we get the following properties.
\begin{lemma}\label{th:bccond}
If a consistent B-series method is energy-preserving,
then it follows that
\begin{align} \label{eq:t2}
a([\ti^{k-1}]) \paren*{=\int_0^1 B_\tau C_\tau^{k-1}\,\rmd \tau}
= \frac{1}{k}\qquad \text{for all } k\in\mathbb{N}^+ (:= \{ 1,2,3,\dots \}).
\end{align} 
\end{lemma}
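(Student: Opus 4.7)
The plan is to apply the Chartier--Faou--Murua characterization \eqref{eq:cfmcond} to the free-tree class of the bushy tree $[\ti^k]$ (the star $K_{1,k}$ with $k+1$ vertices). For each $k\geq 2$ this collapses into a two-term identity involving only $a(\ti)$ and $a([\ti^{k-1}])$, from which $a([\ti^{k-1}])=1/k$ falls out directly. The boundary case $k=1$ is immediate from consistency: $a(\ti)=\int_0^1 B_\tau\,\rmd\tau=1$.

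First I would determine $\pi^{-1}(\overline{[\ti^k]})$ for $k\geq 2$. The star $K_{1,k}$ has precisely two topologically distinct rootings: the original $[\ti^k]$ (rooted at the centre) and $[[\ti^{k-1}]]$ (rooted at any leaf, all leaves producing the same tree by symmetry). A single Butcher-product root shift exchanges them, so $\kappa([\ti^k],[[\ti^{k-1}]])=1$.

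Next I would compute the remaining ingredients. The symmetry coefficients are $\sigma([\ti^k])=k!$ and $\sigma([[\ti^{k-1}]])=(k-1)!$, while the subforests are $B_-([\ti^k])=\ti^k$ and $B_-([[\ti^{k-1}]])=[\ti^{k-1}]$. Substituting into \eqref{eq:cfmcond} with $t=[\ti^k]$ and using $a(\ti)=1$ yields
\begin{align*}
\frac{1}{k!}\,a(\ti)^k-\frac{1}{(k-1)!}\,a([\ti^{k-1}])=0,
\end{align*}
which rearranges to $a([\ti^{k-1}])=1/k$. The parenthetical equality $a([\ti^{k-1}])=\int_0^1 B_\tau C_\tau^{k-1}\,\rmd\tau$ in the statement is just the elementary-weights recursion $\phi(t)=\int_0^1 B_\tau\prod_i\phi_\tau(t_i)\,\rmd\tau$ applied to $t=[\ti^{k-1}]$, with $\phi_\tau(\ti)=C_\tau$.

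The only delicate point is enumerating $\pi^{-1}(\overline{[\ti^k]})$ correctly: one must verify both that no further topologically distinct rooting of the star exists and that the leaf-rooting is exactly one root shift away from the centre-rooting, so that the alternating sign $(-1)^{\kappa}$ and the factorials align to leave a single unknown. Once these combinatorial facts are in hand, each $k\geq 2$ is settled by a one-line computation and no induction is needed, since every bushy tree $[\ti^{k-1}]$ is pinned down independently by its own free-tree equation.
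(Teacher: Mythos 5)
Your proof is correct and follows essentially the same route as the paper: both apply the Chartier--Faou--Murua condition \eqref{eq:cfmcond} to the star free tree, identify its only two rootings $[\ti^k]$ and $[[\ti^{k-1}]]$ with $\sigma=k!$ and $(k-1)!$, and solve the resulting two-term relation using $a(\ti)=1$. Your separate treatment of $k=1$ via consistency is a minor extra care the paper leaves implicit.
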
 

\begin{proof}
The idea of the proof is similar to that of Theorem 2.2 in \cite{cos14}.
This theorem is proved by considering a free tree
such that $k$ vertices are connected to a common vertex,
e.g. $\tix$ for $k=4$.
Any free tree of this type has two equivalent trees: $t_1 = [\ti^k]$ and $t_2=[[\ti^{k-1}]]$.
The symmetry coefficients are easily calculated to be
$\sigma (t_1) = k! $ and $\sigma(t_2) = (k-1)!$.
Note that $a(B_- (t_1)) = a(\ti)^k = 1$
and $a(B_-(t_2)) = a([\ti^{k-1}])$.
Substituting $\sigma$ and $a$ into \eqref{eq:cfmcond},
we obtain the condition \eqref{eq:t2}.
\end{proof}

\begin{lemma}
If a consistent CSRK method is energy-preserving,
then it follows that
\begin{align}\label{eq:coscond1}
&p\int_0^1\int_0^1
B_\tau C_\tau ^{p-1} A_{\tau,\zeta} C_\zeta ^{q}
\,\rmd \tau \rmd \zeta
-
q\int_0^1\int_0^1
B_\tau C_\tau ^{q-1} A_{\tau,\zeta} C_\zeta ^{p}
\,\rmd \tau \rmd \zeta \\
&\hspace{15em}=
\frac{1}{q+1} - \frac{1}{p+1},
\qquad \text{for all } p,q\in\mathbb{N}^+. \nonumber
\end{align}
\end{lemma}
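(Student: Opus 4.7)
The plan is to follow the strategy of Lemma~\ref{th:bccond} and apply the Chartier--Faou--Murua condition \eqref{eq:cfmcond} to an appropriate free tree. The case $p=q$ makes both sides of the claimed identity trivially zero, so I focus on $p\neq q$.

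Consider the ``double broom'' free tree $\bar{t}$ on $p+q+2$ vertices, consisting of a central edge $v_1 v_2$ together with $p$ leaves attached to $v_1$ and $q$ leaves attached to $v_2$. Up to isomorphism, the elements of $\pi^{-1}(\bar{t})$ are obtained by placing the root at one of four vertex types, giving
\begin{align*}
t_1 &= [\ti^{p},[\ti^{q}]], & t_2 &= [\ti^{q},[\ti^{p}]], \\
t_3 &= [[\ti^{p-1},[\ti^{q}]]], & t_4 &= [[\ti^{q-1},[\ti^{p}]]],
\end{align*}
with symmetry coefficients $\sigma(t_1)=\sigma(t_2)=p!\,q!$, $\sigma(t_3)=(p-1)!\,q!$, and $\sigma(t_4)=(q-1)!\,p!$. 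Since $\kappa$ equals the graph distance between the corresponding root positions in $\bar{t}$, one reads off $\kappa(t_1,t_1)=0$, $\kappa(t_1,t_2)=\kappa(t_1,t_3)=1$, and $\kappa(t_1,t_4)=2$.

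Next I compute the forest values $a(B_-(t_i))$ with $a=\phi$. By the multiplicativity of $\phi$ on forests, the normalization $\phi(\ti)=1$, and Lemma~\ref{th:bccond}, one obtains $a(B_-(t_1))=\phi([\ti^{q}])=1/(q+1)$ and $a(B_-(t_2))=\phi([\ti^{p}])=1/(p+1)$. Unfolding the recursive definition of the elementary weights,
\[
a(B_-(t_3))=\phi([\ti^{p-1},[\ti^{q}]])=\int_0^1\!\int_0^1 B_\tau C_\tau^{p-1}A_{\tau,\zeta}C_\zeta^{q}\,\rmd\tau\,\rmd\zeta,
\]
and analogously $a(B_-(t_4))$ equals the integral in the second term of the claim.

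Substituting these four values into \eqref{eq:cfmcond} and clearing denominators by multiplying through by $p!\,q!$ collapses the four terms into the stated identity. The main obstacle is the combinatorial bookkeeping — correctly enumerating the four rooted-tree representatives of $\bar{t}$ together with their symmetry coefficients and pairwise distances $\kappa$ — after which the remainder is a routine substitution.
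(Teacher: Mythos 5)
Your proposal is correct and follows essentially the same route as the paper: it applies the Chartier--Faou--Murua condition \eqref{eq:cfmcond} to the same double-bush free tree, identifies the same four rooted representatives with the same symmetry coefficients, evaluates the two bushy forests via \autoref{th:bccond}, and recovers the identity after clearing $p!\,q!$. The only cosmetic differences are your use of the graph-distance description of $\kappa$ (equivalent to the paper's $\kappa(t_i,t_j)=|i-j|$ along the root-shift chain) and your explicit dismissal of the trivial case $p=q$.
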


\begin{proof}
The idea of the proof is similar to that of Lemma 2.3 in \cite{cos14}.
To prove this condition, we introduce double bush trees.
As illustrated in the picture below,
a double bush tree $t_{p,q}$ is a free tree having
$p$ leaves on the left side and $r$ leaves on the right side
(this figure is an example with $p=2$ and $q=3$).

\begin{center}
\begin{tikzpicture}[every node/.style={circle,fill=black,scale=0.8}]
    \coordinate (L) at (0cm, 0cm);
    \coordinate (R) at (1cm, 0cm);
    \coordinate (L1) at (-0.4cm, -0.4cm);
    \coordinate (L2) at (-0.4cm, 0.4cm);
    \coordinate (R1) at (1.4cm, -0.4cm);
    \coordinate (R2) at (1.5cm, 0cm);
    \coordinate (R3) at (1.4cm, 0.4cm);
    \node at (L) {}; \node at (R) {}; \node at (L1) {}; \node at (L2) {};
    \node at (R1) {}; \node at (R2) {};\node at (R3) {}; 
    \draw[-] (L) -- (R);
    \draw[-] (L1) -- (L);
    \draw[-] (L2) -- (L);
    \draw[-] (R1) -- (R);
    \draw[-] (R2) -- (R);
    \draw[-] (R3) -- (R);
\end{tikzpicture}
\end{center}
It is clear that $t_{p,p}$ is superfluous, and $t_{p,q}=t_{q,p}$.

There are four distinct but equivalent rooted trees for $t_{p,q}$:
$t_1=[[\ti^{p-1}[\ti^{q}]]]$,
$t_2=[\ti^p[\ti^q]]$,
$t_3=[[\ti^p]\ti^q]$ and
$t_4=[[[\ti^p]\ti^{q-1}]]$
(it is clear that $\kappa (t_i,t_j) = |i-j|$).
Here are examples for $t_{2,3}$:
\begin{align*}
t_1=
\begin{tikzpicture}[grow'=up,sibling distance=0.16cm, level distance=0.4cm, baseline=-0.02cm,
    edge from parent/.style={draw, edge from parent path={(\tikzparentnode) -- (\tikzchildnode)}}]
    \tikzstyle{every node}=[draw, circle, fill=black, inner sep=1pt, minimum size=2mm]
    \Tree[.{} [.{} [.{} ] [.{} [.{} ] [.{} ] [.{} ]]]  ];
\end{tikzpicture},
\quad
t_2 =
\begin{tikzpicture}[grow'=up,sibling distance=0.16cm, level distance=0.4cm, baseline=-0.02cm,
    edge from parent/.style={draw, edge from parent path={(\tikzparentnode) -- (\tikzchildnode)}}]
    \tikzstyle{every node}=[draw, circle, fill=black, inner sep=1pt, minimum size=2mm]
    \Tree[.{} [.{} ] [.{} ] [.{} [.{} ] [.{} ] [.{} ]] ];
\end{tikzpicture},
\quad
t_3 =
\begin{tikzpicture}[grow'=up,sibling distance=0.16cm, level distance=0.4cm, baseline=-0.02cm,
    edge from parent/.style={draw, edge from parent path={(\tikzparentnode) -- (\tikzchildnode)}}]
    \tikzstyle{every node}=[draw, circle, fill=black, inner sep=1pt, minimum size=2mm]
    \Tree[.{} [.{} [.{} ] [.{} ]] [.{} ] [.{} ] [.{} ] ];
\end{tikzpicture},
\quad
t_4 = 
\begin{tikzpicture}[grow'=up,sibling distance=0.16cm, level distance=0.4cm, baseline=-0.02cm,
    edge from parent/.style={draw, edge from parent path={(\tikzparentnode) -- (\tikzchildnode)}}]
    \tikzstyle{every node}=[draw, circle, fill=black, inner sep=1pt, minimum size=2mm]
    \Tree[.{} [.{} [.{} [.{} ] [.{} ]] [.{} ] [.{} ]]];
\end{tikzpicture}.
\end{align*}
Thus, it follows that
\begin{align*}
& a(B_-(t_1)) = a([\ti^{p-1},[\ti^q]]) = \int_0^1 \int_0^1
B_\tau C_\tau ^{p-1} A_{\tau,\zeta} C_\zeta ^{q} \, \rmd \tau \rmd \zeta, \\
& a(B_-(t_2)) = a(\ti)^p a([\ti^q]) = \frac{1}{q+1}, \\
& a(B_-(t_3)) = a(\ti)^q a([\ti^p]) = \frac{1}{p+1}, \\
& a(B_-(t_4)) = a([\ti^{q-1},[\ti^p]]) = \int_0^1 \int_0^1
B_\tau C_\tau ^{q-1} A_{\tau,\zeta} C_\zeta ^{p} \, \rmd \tau \rmd \zeta.
\end{align*}
The second and third are due to \autoref{th:bccond}.
The symmetry coefficients are calculated to be
\begin{align*}
\sigma (t_1) = (p-1)! q!, \quad
\sigma (t_2) =  \sigma (t_3) = p!q!, \quad
\sigma (t_4) = p! (q-1) !.
\end{align*}
Substituting $\kappa$, $a$ and $\sigma$ into \eqref{eq:cfmcond},
we obtain the condition \eqref{eq:coscond1}.
\end{proof}

We consider an $s\times \infty$ matrix $\Phi $ defined by
\begin{align*}
\Phi_{i,j} = \int _0^1 \tau^{i-1} C_\tau^j\,\rmd \tau.
\end{align*}
We now show the following theorem.
\begin{theorem}\label{maintheorem}
If a consistent CSRK method for which $\Phi$ is of full rank $s$
is energy-preserving,
then $M=M^\top$.
\end{theorem}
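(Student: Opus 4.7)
The plan is to combine \autoref{th:bccond} with the lemma above (which yields \eqref{eq:coscond1}), reshape the latter via an integration by parts so that the antisymmetric part $M^{a} := (M-M^\top)/2$ appears explicitly, and then use the rank hypothesis on $\Phi$ to force $M^{a} = 0$. I write $\beta_q \in \bbR^s$ for the $q$-th column of $\Phi$, i.e.\ $(\beta_q)_i = \int_0^1 \tau^{i-1} C_\tau^q\,\rmd\tau$.

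The first main step is to establish the pointwise polynomial identity $B_\tau = \dot C_\tau$. Since $A_{0,\zeta}=0$ gives $C_0=0$ and consistency gives $C_1 = \int_0^1 B_\zeta\,\rmd\zeta = 1$, direct calculation yields $\int_0^1 \dot C_\tau C_\tau^{k-1}\,\rmd\tau = 1/k$; combining with \autoref{th:bccond} gives $\int_0^1 (B_\tau - \dot C_\tau) C_\tau^{k-1}\,\rmd\tau = 0$ for every $k\geq 1$. The difference $B_\tau - \dot C_\tau$ is a polynomial of degree at most $s-1$, so writing its coefficient vector as $\vec e\in\bbR^s$, this says $\vec e^\top \beta_q = 0$ for all $q \geq 0$. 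The rank-$s$ hypothesis on $\Phi$ then forces $\vec e = 0$.

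Armed with $B_\tau = \dot C_\tau$, I integrate by parts in \eqref{eq:coscond1}. Using $pC_\tau^{p-1}\dot C_\tau = \frac{\rmd}{\rmd\tau}(C_\tau^p)$, the boundary data $C_0 = 0$, $C_1 = 1$, $A_{0,\zeta}=0$, $A_{1,\zeta}=B_\zeta$, and $\int_0^1 B_\zeta C_\zeta^q\,\rmd\zeta = 1/(q+1)$ from \autoref{th:bccond}, one obtains
\begin{align*}
p\int_0^1\!\!\int_0^1 \dot C_\tau C_\tau^{p-1} A_{\tau,\zeta} C_\zeta^q\,\rmd\tau\,\rmd\zeta = \frac{1}{q+1} - \int_0^1\!\!\int_0^1 C_\tau^p C_\zeta^q \frac{\partial A_{\tau,\zeta}}{\partial \tau}\,\rmd\tau\,\rmd\zeta,
\end{align*}
together with the analogous identity after swapping $(p,q)$. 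Substituting $B_\tau = \dot C_\tau$ into the LHS of \eqref{eq:coscond1} and subtracting, the terms $\tfrac{1}{q+1} - \tfrac{1}{p+1}$ cancel on both sides, leaving
\begin{align*}
\int_0^1\!\!\int_0^1 \frac{\partial A_{\tau,\zeta}}{\partial\tau}\paren*{C_\tau^p C_\zeta^q - C_\tau^q C_\zeta^p}\,\rmd\tau\,\rmd\zeta = 0,\qquad p,q\in\mathbb{N}^{+}.
\end{align*}

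To finish, I write $\frac{\partial A_{\tau,\zeta}}{\partial \tau} = \sum_{i,j=1}^s M_{ij}\tau^{i-1}\zeta^{j-1}$ and split $M = M^{s} + M^{a}$. The symmetric piece $M^{s}$ is killed by the antisymmetric kernel $C_\tau^p C_\zeta^q - C_\tau^q C_\zeta^p$, and the antisymmetric piece, after using antisymmetry of $M^{a}$, collapses to $2\beta_p^\top M^{a}\beta_q = 0$ for all $p,q\geq 1$. To bring $\beta_0 = (1,1/2,\dots,1/s)^\top$ into this kernel I use $B_\tau = \dot C_\tau$ a second time: comparing coefficients in $B_\tau = \sum_{i,j} \tau^{j-1} M_{ij}/i$ and $\dot C_\tau = \sum_{i,j} \tau^{i-1} M_{ij}/j$ yields $M\beta_0 = M^\top \beta_0$, i.e.\ $M^{a}\beta_0 = 0$. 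Hence $\beta_p^\top M^{a}\beta_q = 0$ for all $p,q\geq 0$, and the rank hypothesis says $\{\beta_q\}_{q\geq 0}$ spans $\bbR^s$, so $M^{a} = 0$, i.e.\ $M = M^\top$. I expect the main difficulty to be the first step, where the rank hypothesis must be translated into the pointwise equality $B_\tau = \dot C_\tau$; once this is in place, the rest is integration by parts and a spanning argument.
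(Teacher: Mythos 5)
Your proof is correct and takes essentially the same route as the paper: the double-bush condition \eqref{eq:coscond1} together with $B_\tau=C_\tau^\prime$, integration by parts, and the reduction to $\Phi^\top(M-M^\top)\Phi=0$, closed by the full-rank hypothesis. The only real difference is that you make explicit (via orthogonality of the degree-$(s-1)$ polynomial $B_\tau-C_\tau^\prime$ to all columns of $\Phi$, and the auxiliary relation $(M-M^\top)\begin{bmatrix}1&\tfrac12&\cdots&\tfrac1s\end{bmatrix}^\top=0$) the step that the paper only asserts follows from consistency and the rank assumption.
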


\begin{proof}
We see by contradiction that
the consistency and the full rank assumption indicate $B_\tau = C_\tau^\prime$.
Under the assumption $B_\tau = C_\tau^\prime$,
we can rewrite the condition \eqref{eq:coscond1} as follows.
Using the integration-by-parts formula, we have
\begin{align*}
&\int_0^1 \int_0^1 B_\tau C_\tau ^{p-1} A_{\tau,\zeta} C_\zeta^q \, \rmd \tau \rmd \zeta \\
&= 
\int _0^1 \left[ \frac{1}{p} C_\tau ^{p} A_{\tau,\zeta} C_\zeta^q \right]_0^1
\rmd \zeta
-
\frac{1}{p} \int_0^1 \int_0^1 C_\tau^p A_{\tau,\zeta}^\prime C_\zeta^q \, 
\rmd\tau\rmd\zeta \qquad (A_{\tau,\zeta} ^\prime = \frac{\partial}{\partial \tau} A_{\tau,\zeta}) \\
&=
\frac{1}{p} \int _0^1 B_\zeta C_\zeta^q \,\rmd \zeta -
\frac{1}{p} \int_0^1 \int_0^1 C_\tau^p A_{\tau,\zeta}^\prime C_\zeta^q \, \rmd\tau\rmd\zeta \\
&= 
\frac{1}{p(q+1)} - \frac{1}{p} \int_0^1 \int_0^1 C_\tau^p
A_{\tau,\zeta}^\prime C_\zeta^q \, \rmd\tau\rmd\zeta .
\end{align*}
Similarly,
\begin{align*}
\int_0^1 \int_0^1 B_\tau C_\tau ^{q-1} A_{\tau,\zeta} C_\zeta^p \, \rmd \tau \rmd \zeta
=
\frac{1}{q(p+1)} - \frac{1}{q} \int_0^1 \int_0^1 C_\tau^q
A_{\tau,\zeta}^\prime C_\zeta^p \, \rmd\tau\rmd\zeta .
\end{align*}
Substituting these relations into the condition \eqref{eq:coscond1},
we have
\begin{align*}
\int_0^1 \int_0^1 C_\tau^p A_{\tau,\zeta}^\prime C_\zeta^q \,\rmd\tau\rmd\zeta
=
\int_0^1 \int_0^1 C_\tau^q A_{\tau,\zeta}^\prime C_\zeta^p \,\rmd\tau\rmd\zeta .
\end{align*}
Note that
\begin{align*}
C_\tau^p A_{\tau,\zeta}^\prime C_\zeta^q 
=
C_\tau^p [1,\tau , \dots ,\tau ^{s-1}] M 
\begin{bmatrix}
1 \\ \zeta \\ \vdots \\ \zeta^{s-1}
\end{bmatrix} C_\zeta^q
\end{align*}
and 
\begin{align*}
\int_0^1 \int_0^1 C_\tau^q A_{\tau,\zeta}^\prime C_\zeta^p \,\rmd\tau\rmd\zeta 
& =
\int_0^1 \int_0^1
C_\tau^q [1,\tau , \dots ,\tau ^{s-1}] M 
\begin{bmatrix}
1 \\ \zeta \\ \vdots \\ \zeta^{s-1}
\end{bmatrix} C_\zeta^p \,\rmd\tau\rmd\zeta \\
& =
\int_0^1 \int_0^1
C_\zeta^q [1,\zeta , \dots ,\zeta ^{s-1}] M 
\begin{bmatrix}
1 \\ \tau \\ \vdots \\ \tau^{s-1}
\end{bmatrix} C_\tau^p \,\rmd\tau\rmd\zeta \\
& =
\int_0^1 \int_0^1
C_\tau^p [1,\tau , \dots ,\tau ^{s-1}] M ^\top
\begin{bmatrix}
1 \\ \zeta \\ \vdots \\ \zeta^{s-1}
\end{bmatrix} C_\zeta^q \,\rmd\tau\rmd\zeta.
\end{align*}
Therefore, we have
\begin{align} 
\paren*{\int_0^1 C_\tau^p [1,\tau , \dots ,\tau ^{s-1}] \,\rmd \tau}
(M-M^\top)
\paren*{\int_0^1 C_\zeta^q \begin{bmatrix}
1 \\ \zeta \\ \vdots \\ \zeta^{s-1}
\end{bmatrix} \,\rmd \zeta}
=0, \label{eq:pmp}
\end{align}
which is equivalent to
\begin{align*}
\Phi^\top (M-M^\top) \Phi = 0.
\end{align*}
Since we assume that
the matrix $\Phi$ is of full rank $s$,
it follows that $M=M^\top$. 
\end{proof}

\begin{remark}
\label{rem:con}
Let us discuss the assumption about the rank of $\Phi$.
The technique of the proof of \autoref{maintheorem}
is motivated by the proof of the necessary condition of symplectic RK methods,
especially Theorem~7.10 in \cite[Chapter VI]{ha06},
where a similar $s\times\infty$ matrix characterizes the reducibility of RK methods.
In contrast to RK methods, however, 
we believe that for any consistent CSRK methods the rank of the matrix $\Phi$
is always of full rank $s$.
This conjecture can be easily verified for monotonic $C_\tau$
or the case the function space spanned by
$\{ C_\tau^i\}_{i\in \mathbb{N}^+}$ is dense in $\{v \ | \ v\in C[0,1], \ v(0)=0 \}$,
but
the conjecture for more general cases has not been proved.
We leave the proof of this conjecture to our future work.
\end{remark}

\section{Families of energy-preserving continuous stage Runge--Kutta methods and order conditions}
\label{sec4}

In this section, we characterize order conditions of energy-preserving CSRK methods
in terms of the matrix $M$ \eqref{csrkm},
based on the simplifying assumptions
\begin{alignat*}{4}
B(\rho):& \qquad& \int_0^1 B_\tau C_\tau^{k-1}\, \rmd\tau &= \frac{1}{k}, && \qquad k=1,\dots,\rho,\\
C(\eta):& \qquad& \int_0^1 A_{\tau,\zeta}C_\zeta^{k-1}\,\rmd\zeta 
&= \frac{C_\tau^k}{k}, && \qquad k=1,\dots,\eta, \\
D(\xi) :& \qquad& \int_0^1 B_\tau C_\tau^{k-1}A_{\tau,\zeta} \,\rmd \tau
&= \frac{B_\zeta}{k}\paren*{1-C_\zeta^k}, && \qquad k=1,\dots,\xi,
\end{alignat*}
and construct several families of energy-preserving methods.

The order of a CSRK method satisfying the simplifying assumption
$B(\rho)$, $C(\eta)$ and $D(\xi)$
is at least $\min (\rho,2\eta+2,\eta+\xi+1)$.
The following two theorems can further simplify the order conditions.

\begin{theorem}
For any consistent CSRK methods with the property $M=M^\top$,
the $B(\rho)$ condition is satisfied for all $\rho=1,2,\dots$.
\end{theorem}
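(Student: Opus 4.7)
The plan is to exploit an elegant consequence of the symmetry $M=M^\top$: namely, that $B_\tau$ coincides with the derivative $C_\tau'$. Once this identity is established, the $B(\rho)$ condition for every $\rho$ reduces to a one-line integration by substitution.

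First I would use the matrix representation \eqref{csrkm} to write out $B_\tau$ and $C_\tau'$ explicitly. Since $B_\tau = A_{1,\tau}$, evaluating \eqref{csrkm} at $\tau=1$ gives
\begin{align*}
B_\tau = \bigl[\,1,\tfrac{1}{2},\ldots,\tfrac{1}{s}\,\bigr]\,M\,\bigl[\,1,\tau,\ldots,\tau^{s-1}\,\bigr]^\top,
\end{align*}
while differentiating $A_{\tau,\zeta}$ in $\tau$ and integrating over $\zeta\in[0,1]$ gives
\begin{align*}
C_\tau' = \int_0^1 \frac{\partial}{\partial\tau}A_{\tau,\zeta}\,\rmd\zeta
= \bigl[\,1,\tau,\ldots,\tau^{s-1}\,\bigr]\,M\,\bigl[\,1,\tfrac{1}{2},\ldots,\tfrac{1}{s}\,\bigr]^\top.
\end{align*}
Both expressions are scalars and hence equal their own transpose, so the assumption $M=M^\top$ immediately yields $B_\tau = C_\tau'$.

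Next, consistency $\int_0^1 B_\zeta\,\rmd\zeta = 1$ combined with $B_\tau = C_\tau'$ and $C_0 = 0$ (the latter from $A_{0,\zeta}=0$) forces $C_1 = 1$. Then for any $k\in\mathbb{N}^+$,
\begin{align*}
\int_0^1 B_\tau C_\tau^{k-1}\,\rmd\tau
= \int_0^1 C_\tau^{k-1} C_\tau'\,\rmd\tau
= \tfrac{1}{k}\bigl(C_1^k - C_0^k\bigr) = \tfrac{1}{k},
\end{align*}
which is precisely $B(\rho)$ for every $\rho$.

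There is no genuine obstacle here: the identity $B_\tau = C_\tau'$ is the scalar shadow of the symmetry of $M$, after which the conclusion is elementary. One could alternatively route through \autoref{suffcond} and \autoref{th:bccond}, since $M=M^\top$ guarantees energy preservation of the (consistent, B-series) CSRK method, whereupon \autoref{th:bccond} directly supplies the desired identity. The direct computation above is, however, shorter and entirely self-contained within the matrix formalism used throughout this section.
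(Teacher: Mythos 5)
Your proof is correct and follows essentially the same route as the paper's: deduce $B_\tau = C_\tau'$ from $M=M^\top$, use consistency and $C_0=0$ to get $C_1=1$, and evaluate $\int_0^1 C_\tau^{k-1}C_\tau'\,\rmd\tau = \frac{1}{k}[C_\tau^k]_0^1$. The only difference is that you spell out the matrix computation behind $B_\tau=C_\tau'$, which the paper merely asserts.
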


\begin{proof}
For any CSRK methods with the property $M=M^\top$, it follows that$B_\tau=C_\tau^\prime$.
The consistency indicates that 
$C_1 = \int_0^1 B_\tau \,\rmd\tau=1$.
Hence, it follows that for any $k$
\begin{align*}
\int_0^1 B_\tau C_\tau^{k-1}\,\rmd\tau =
\frac{1}{k}\left[ C_\tau^k \right]_0^1 = \frac{1}{k}.
\end{align*} 
\end{proof}

\begin{theorem}
For any consistent CSRK methods with the property $M=M^\top$,
the $C(\eta+1)$ condition is equivalent to the $D(\eta)$ condition.
\end{theorem}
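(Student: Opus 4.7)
The plan is to reduce the equivalence to a single integration-by-parts manipulation, leveraging two structural consequences of $M=M^\top$: that $B_\tau = C_\tau'$ (already used in the previous theorem) and that the kernel $\partial_\tau A_{\tau,\zeta}$ is symmetric in its arguments, i.e.\ $\partial_\tau A_{\tau,\zeta} = \partial_\zeta A_{\zeta,\tau}$. Consistency together with $B_\tau=C_\tau'$ also pins down $C_0=0$ and $C_1=1$, which will take care of the boundary terms.

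Fixing $k\in\{1,\dots,\eta\}$, I would rewrite $B_\tau C_\tau^{k-1}$ as $\tfrac{1}{k}(C_\tau^k)'$ and integrate by parts in $\tau$ on the left-hand side of the $D$-identity. The boundary term collapses to $\tfrac{1}{k}B_\zeta$ (using $A_{0,\zeta}=0$, $C_0=0$, $C_1=1$ and $A_{1,\zeta}=B_\zeta$), and applying the symmetry of $\partial_\tau A_{\tau,\zeta}$ converts the remaining integral into $\tfrac{1}{k}\tfrac{d}{d\zeta}\int_0^1 A_{\zeta,\tau}C_\tau^k\,\rmd\tau$. Hence the $D$-condition at index $k$ becomes equivalent to
\begin{align*}
\frac{d}{d\zeta}\int_0^1 A_{\zeta,\tau} C_\tau^k\,\rmd\tau = B_\zeta C_\zeta^k = \tfrac{1}{k+1}(C_\zeta^{k+1})'.
\end{align*}

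Integrating this ODE in $\zeta$ from $0$, where both sides vanish thanks to $A_{0,\tau}=0$ and $C_0=0$, further transforms it into $\int_0^1 A_{\zeta,\tau} C_\tau^k\,\rmd\tau = \tfrac{C_\zeta^{k+1}}{k+1}$, which is precisely the $C$-identity at index $k+1$. Letting $k$ range over $\{1,\dots,\eta\}$ yields equivalence between $D(\eta)$ and the $C$-identities at indices $\{2,\dots,\eta+1\}$; the remaining index $k=1$ of $C(\eta+1)$, namely $\int_0^1 A_{\tau,\zeta}\,\rmd\zeta=C_\tau$, is nothing but the definition of $C_\tau$ and holds automatically. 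I do not foresee any serious obstacle; the only point worth watching is that the whole chain must be a genuine equivalence rather than a one-way implication, which it is because integration by parts and the symmetry exchange are identities, and the final ODE integration is reversible thanks to the matching boundary value at $\zeta=0$.
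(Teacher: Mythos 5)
Your proof is correct and is essentially the paper's argument run in the reverse direction: where the paper starts from the $C$-identity, integrates by parts in $\zeta$ via the symmetric antiderivative $\tilde{A}_{\tau,\zeta}$ and then differentiates, you start from the $D$-identity, integrate by parts in $\tau$ via the symmetric kernel $\partial_\tau A_{\tau,\zeta}$ and then integrate, with the same use of $B_\tau=C_\tau'$, $C_0=0$, $C_1=1$ and the vanishing boundary values to make every step reversible. Your explicit remark that the index-$1$ case of $C(\eta+1)$ holds by the definition of $C_\tau$ cleanly handles the index bookkeeping that the paper leaves implicit.
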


\begin{proof}
We prove
\begin{align}
\int_0^1 A_{\tau,\zeta} C_\zeta^k \,\rmd\zeta= \frac{C_\tau^{k+1}}{k+1}
 \label{eq:Ck1}
\end{align}
is equivalent to
\begin{align}
\int_0^1 B_\tau C_\tau^{k-1} A_{\tau,\zeta} \,\rmd\tau
=\frac{B_\zeta}{k} \paren*{1-C_\zeta^k}.
\label{eq:Dk}
\end{align}

Due to $M=M^\top$, it follows that $C_\tau^\prime=B_\tau$.
Recall that we always assume that $C_0 = \int_0^1 A_{0,\zeta}\,\rmd\zeta=0$,
and the consistency indicates $C_1=1$.
Let
\begin{align*}
\tilde{A}_{\tau,\zeta} = 
\begin{bmatrix}
\tau & \frac{\tau^2}{2} & \dots & \frac{\tau^s}{s}
\end{bmatrix}
M
\begin{bmatrix}
\zeta \\ \zeta^2/2 \\ \vdots \\ \zeta^s/s
\end{bmatrix},
\end{align*}
so that $\frac{\partial}{\partial \zeta} \tilde{A}_{\tau,\zeta} = A_{\tau,\zeta}$
and
$\tilde{A}_{\tau,\zeta} = \tilde{A}_{\zeta,\tau}$.

We start with \eqref{eq:Ck1}.
By using the integration-by-parts formula, we see that \eqref{eq:Ck1} is equivalent to
\begin{align*}
\left[ \tilde{A}_{\tau,\zeta} C_\zeta^k \right]_0^1
-\int_0^1 \tilde{A}_{\tau,\zeta} k C_\zeta^{k-1} B_\zeta\,\rmd\zeta
=
\frac{C_\tau^{k+1}}{k+1}.
\end{align*}
Here, we note that $\left[ \tilde{A}_{\tau,\zeta} C_\zeta^k \right]_0^1
= \tilde{A}_{\tau,1}=\tilde{A}_{1,\tau}$.
By changing $\tau$ and $\zeta$ each other, we obtain
\begin{align}
\tilde{A}_{1,\zeta} - k \int_0^1 B_\tau C_\tau^{k-1} \tilde{A}_{\tau,\zeta}\,\rmd\tau
=
\frac{C_\zeta^{k+1}}{k+1}.
\label{eq:A2}
\end{align}
Differentiating the both sides with respect to $\zeta$ leads to
\begin{align}
B_\zeta - k \int_0^1 B_\tau C_\tau^{k-1} A_{\tau,\zeta}\,\rmd\tau
=
C_\zeta^k B_\zeta, 
\label{eq:A1}
\end{align}
which is equivalent to \eqref{eq:Dk}.
Note that \eqref{eq:A2} is equivalent to \eqref{eq:A1} due to $C_0=0$.
\end{proof}

In the following subsections,
we characterize the order conditions of energy-preserving CSRK methods
in terms of the matrix $M$.

\subsection{Energy-preserving integrators with $B_\zeta=1$}
First, we consider the simplest case $B_\zeta =1$.

\begin{theorem}
\label{th:order2k}
A CSRK method is energy-preserving and of order at least $p=2\eta$
if the symmetric matrix $M \in \bbR ^{s\times s}$ satisfies
\begin{align}
\begin{bmatrix}
\frac{1}{k} & \frac{1}{k+1} & \dots & \frac{1}{k+s-1} 
\end{bmatrix}
M = i_k^\top , \qquad k = 1,\dots,\eta,
\label{eq:cond2k}
\end{align}
where all components of $i_k\in\bbR^s$ are zero except for the $k$-th component
which is $1$.
\end{theorem}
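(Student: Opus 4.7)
The plan is to recognize condition \eqref{eq:cond2k} as the matrix translation of the simplifying assumption $C(\eta)$ together with the normalization $B_\zeta=1$, and then invoke the order machinery already assembled in this section. Energy-preservation is immediate from \autoref{suffcond} since $M=M^\top$, so all the real work goes into the order claim.

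First I would extract the $k=1$ instance of \eqref{eq:cond2k}, which reads $\begin{bmatrix}1 & \tfrac12 & \cdots & \tfrac1s\end{bmatrix} M = i_1^\top$. Substituting into \eqref{csrkm} at $\tau=1$ gives
\[
B_\zeta \;=\; A_{1,\zeta} \;=\; i_1^\top \begin{bmatrix}1 & \zeta & \cdots & \zeta^{s-1}\end{bmatrix}^\top \;=\; 1,
\]
so $\int_0^1 B_\zeta\,\rmd\zeta=1$ and the method is consistent. The preceding theorem on symmetric $M$ then supplies $B(\rho)$ for every $\rho$ as well as the relation $B_\tau = C_\tau^\prime$; integrating and using $C_0=0$ yields $C_\tau=\tau$.

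Next I would show that the remaining rows of \eqref{eq:cond2k} encode exactly the matrix form of $C(\eta)$. Substituting $C_\zeta=\zeta$ and \eqref{csrkm} into the defining integral of $C(\eta)$ produces
\[
\int_0^1 A_{\tau,\zeta} C_\zeta^{k-1}\,\rmd\zeta
= \begin{bmatrix} \tau & \tfrac{\tau^2}{2} & \cdots & \tfrac{\tau^s}{s} \end{bmatrix} M
\begin{bmatrix} \tfrac1k \\ \tfrac1{k+1} \\ \vdots \\ \tfrac1{k+s-1} \end{bmatrix},
\]
and condition \eqref{eq:cond2k} combined with $M=M^\top$ is precisely $M[\tfrac1k,\ldots,\tfrac1{k+s-1}]^\top = i_k$, which collapses the right-hand side to $[\tau,\tfrac{\tau^2}{2},\ldots,\tfrac{\tau^s}{s}]\,i_k = \tau^k/k = C_\tau^k/k$. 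Hence $C(\eta)$ holds, and by the equivalence $C(\eta+1)\Leftrightarrow D(\eta)$ applied with $\eta$ replaced by $\eta-1$ we obtain $D(\eta-1)$. The order estimate $\min(\rho,2\eta+2,\eta+\xi+1)$ with $\rho=\infty$ and $\xi=\eta-1$ then collapses to $2\eta$.

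The only point that needs care is noticing that the $k=1$ row of \eqref{eq:cond2k} does double duty: it is the first condition of $C(\eta)$ \emph{and} also the normalization that forces $C_\tau=\tau$, which is what permits the replacement of $C_\zeta^{k-1}$ by $\zeta^{k-1}$ inside the integrals in the remaining cases. Once this dual role is identified, everything else is a straightforward translation between linear equations on $M$ and integral conditions on $A_{\tau,\zeta}$.
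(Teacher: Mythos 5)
Your proof is correct and takes essentially the same route as the paper: identify the $k=1$ row with $B_\zeta=1$ (hence $C_\tau=\tau$), recognize the remaining rows as the matrix form of $C(\eta)$, and then invoke energy preservation via \autoref{suffcond} together with $B(\infty)$ and $C(\eta)\Leftrightarrow D(\eta-1)$ to get order $\min(2\eta+2,2\eta)=2\eta$. You merely make explicit the final order bookkeeping that the paper leaves implicit.
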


\begin{proof}
We obtain $B_\zeta=1$ by substituting $k=1$ into \eqref{eq:cond2k},
and this indicates that $C_\tau = \tau$.
We consider an integrator satisfying the $C(\eta)$ condition.
Since 
\begin{align*}
\int_0^1 A_{\tau,\zeta}C_\zeta^{k-1}\,\rmd\zeta
&=
\int_0^1
\begin{bmatrix}
\tau & \frac{\tau^2}{2}& \cdots & \frac{\tau^s}{s}
\end{bmatrix}
M
\begin{bmatrix}
\zeta^{k-1} \\ \vdots \\ \zeta^{s+k-2}
\end{bmatrix}\,\rmd\zeta \\
&=
\begin{bmatrix}
\tau & \frac{\tau^2}{2}& \cdots & \frac{\tau^s}{s}
\end{bmatrix}
M
\begin{bmatrix}
1/k \\ \vdots \\ 1/(s+k-1)
\end{bmatrix},
\end{align*}
this coincides with $C_\tau^k/k = \tau^k/k$ for any $\tau$
if and only if 
\begin{align}
\begin{bmatrix}
\frac{1}{k} & \frac{1}{k+1} & \dots & \frac{1}{k+s-1} 
\end{bmatrix}
M = i_k^\top.
\end{align}
\end{proof}

\begin{remark}
For the condition \eqref{eq:cond2k},
it is possible to choose $\eta = s$.
In this case, \eqref{eq:cond2k} is equivalent to
\begin{align*}\def\arraystretch{1.3}
\begin{bmatrix}
1 & \frac{1}{2} & \cdots & \frac{1}{s} \\
\frac{1}{2} & \frac{1}{3} & \cdots & \frac{1}{s+1}\\
\vdots & \vdots & \ddots & \vdots \\
\frac{1}{s} & \frac{1}{s+1} & \cdots & \frac{1}{2s-1}
\end{bmatrix}
M
= I_s,
\end{align*}
and thus $M$ can be written as $M = H^{-1}$,
where $H$ is the Hilbert matrix defined by
$H_{ij} = 1/(i+j-1)$.
In this case, the CSRK method coincides with the AVF collocation method
of order $2s$.
\end{remark}

\subsection{Energy-preserving integrators with $B_\zeta\neq 1$}
It is also possible to derive energy-preserving integrators
which do not satisfy $B_\zeta=1$.
We here illustrate a derivation of 4-degree fourth order integrators.

Let $s=4$ and write $\displaystyle{\begin{bmatrix}
1 & \frac{1}{2} & \frac{1}{3} & \frac{1}{4}
\end{bmatrix}M = v^\top}$.
Then $B_\zeta$ and $C_\tau$ are expressed as
\begin{align*}
B_\zeta = v^\top \begin{bmatrix}
1 \\ \zeta \\ \zeta ^2 \\ \zeta ^3
\end{bmatrix},
\qquad
C_\tau = \begin{bmatrix}
\tau & \frac{\tau^2}{2} & \frac{\tau^3}{3} & \frac{\tau^4}{4}
\end{bmatrix} v.
\end{align*}
If the $C(2)$ assumption is satisfied, we have
\begin{align} \label{bnc2}
\int_0^1 A_{\tau,\zeta} C_\zeta \,\rmd\zeta = \frac{C_\tau^2}{2}.
\end{align}
This relation indicates that the polynomial degree of $C_\tau$
is at most 2, because the polynomial degree in $\tau$
of the left hand side is 4.
We write
\begin{align}
v^\top = \begin{bmatrix}
r & 2 (1-r) & 0 & 0
\end{bmatrix}, \qquad r\neq 1
\end{align}
among several possibilities for which
$\int_0^1 B_\zeta\,\rmd\zeta = 1$.
It follows from \eqref{bnc2} that
\begin{align}
\begin{bmatrix}
\frac{r+2}{6} & \frac{r+3}{12} & \frac{r+4}{20} & \frac{r+5}{30}
\end{bmatrix}
M=
\begin{bmatrix}
0 & r^2 & 3r(1-r) &2(1-r)^2
\end{bmatrix}.
\end{align}
Hence, if a symmetric matrix $M$ satisfies 
\begin{align}\def\arraystretch{1.3}
\begin{bmatrix}
1 & \frac{1}{2} & \frac{1}{3} & \frac{1}{4} \\
\frac{r+2}{6} & \frac{r+3}{12} & \frac{r+4}{20} & \frac{r+5}{30}
\end{bmatrix}
M = 
\begin{bmatrix}
r & 2(1-r) & 0 & 0 \\
0 & r^2 & 3r(1-r) &2(1-r)^2
\end{bmatrix},
\end{align}
the method has order 4.
As an example, if we select $r=0$, this characterization reduces to
\begin{align}\def\arraystretch{1.3}
\begin{bmatrix}
1 & \frac{1}{2} & \frac{1}{3} & \frac{1}{4} \\
\frac{1}{3} & \frac{1}{4} & \frac{1}{5} & \frac{1}{6}
\end{bmatrix}
M = 
\begin{bmatrix}
0 & 2 & 0 & 0 \\
0 & 0 & 0 & 2
\end{bmatrix}.
\end{align}
In this case, $M$ can be expressed with three free parameters 
$\alpha$, $\beta$ and $\gamma$:
\begin{align}\def\arraystretch{1.3}
\begin{bmatrix}
\alpha & \frac{1}{2} & \beta & \frac{1}{6}\\
\frac{1}{2}& \frac{1}{4} & \frac{1}{6} & \frac{1}{8} \\
\beta & \frac{1}{6} & \gamma & \frac{1}{10} \\
\frac{1}{6} & \frac{1}{8} & \frac{1}{10} & \frac{1}{12}
\end{bmatrix}
M = 
\begin{bmatrix}
1 & & & \\
& 1 & & \\
& & 1 & \\
& & & 1
\end{bmatrix}.
\end{align}
It seems that $(\alpha, \beta , \gamma) = (1,\frac{1}{4}, \frac{1}{8})$
are the simplest choices, and in this case $M$ is calculated to be
\begin{align}\def\arraystretch{1.3}
M=
\begin{bmatrix}
-\frac{6}{5} & \frac{72}{5} & -36 & 24\\
\frac{72}{5} & -\frac{144}{5} & -48 & 72 \\
-36 & -48 & 720 & -720 \\
24 & 72 & -720 & 720 
\end{bmatrix}.
\end{align}

As illustrated above, energy-preserving integrators can be obtained 
even for the cases $B_\zeta\neq 1$.
However, such cases require larger degrees,
and thus are less practical than the cases $B_\zeta=1$.

\section{Parallel energy-preserving methods}
\label{sec5}
In this section, we construct new energy-preserving methods
which can be implemented more efficiently than
the AVF collocation method.
In the context of RK methods,
it is known that
the computational cost of solving implicit RK methods
can be reduced if a RK matrix $A$ has only real, distinct eigenvalues
(see, e.g.~\cite{bu13,hw96ii}).
The key of the construction of new integrators
is to apply similar idea to CSRK methods
and put it together with the energy-preserving condition and order conditions.

\subsection{Implementation of Runge--Kutta methods with real eigenvalues}
In general, in order to proceed with a RK process,
we have to solve a system of nonlinear equations of size $sN$ per each time step.
Here we explain that
if the matrix $A$ has only real, distinct eigenvalues, the computational costs
can be reduced, especially in a parallel architecture.

We consider solving an $s$-stage implicit RK method
\begin{align*}
Y_i &= y_0 + h\sum_{j=1}^s a_{ij} f(Y_j),\qquad i=1,\dots,s, \\
y_1 &= y_0 + h\sum_{i=1}^s b_i f(Y_i).
\end{align*}
In order to compute the internal stages $Y_1,\dots,Y_s \in \mathbb{R}^N$,
we have to solve a system of nonlinear equations
\begin{align}\label{eq:RK:ns}
\Phi (Y) = Y- e_s \otimes y_0 - h (A\otimes I_N) F  = 0 
\end{align}
where $\otimes$ denotes the Kronecker product,
$I_N\in \bbR^{N\times N}$ is an identity matrix and
\begin{align}
Y = \begin{bmatrix}
Y_1\\ \vdots \\ Y_s
\end{bmatrix} \in \bbR^{sN}, \quad
e_s =\begin{bmatrix}
1\\ \vdots \\ 1
\end{bmatrix} \in \bbR^s, \quad
F = \begin{bmatrix}
f(Y_1) \\ \vdots \\ f(Y_s)
\end{bmatrix} \in \bbR^{sN}.
\end{align}
The size of the system \eqref{eq:RK:ns} is $sN$.
We usually apply the simplified Newton method to solve the nonlinear system \eqref{eq:RK:ns},
and then obtain the iteration formula
\begin{align}\label{sNewton}
\paren*{I_{sN} - h A \otimes J_0 } \rho^l = - \Phi (Y^l), \qquad
Y^{l+1} = Y^l + \rho^l , \qquad l =0,1,2,\dots ,
\end{align}
where $J_0$ denotes the Jacobian matrix, i.e. $J_0 = S \nabla ^2 H(y_0)$ for Hamiltonian systems.

Below, we show that the linear system \eqref{sNewton} of size $sN$
can be computed efficiently
if the matrix $A$ has only real, distinct eigenvalues.

If all eigenvalues of $A$ are real and distinct,
there exists a matrix $T\in\mathbb{R}^{s\times s}$ such that
\begin{align}
T^{-1} A T = \diag (\lambda_1 , \dots, \lambda_s ), \qquad
\lambda_1,\dots,\lambda_s \in \mathbb{R}.
\end{align} 
Let $M = \paren*{I_{sN} - h A \otimes J_0 }$
and define $\overline{M}$ by $\overline{M} = (T^{-1}\otimes I_N )M (T\otimes I_N )$.
Then it follows that
\begin{align}
\overline{M} 
= \diag ( I_N - h\lambda_1 J_0, \dots,  I_N - h\lambda_s J_0),
\end{align}
where the right hand side expresses a block diagonal matrix.
Therefore, the formula \eqref{sNewton} is translated into
\begin{align*}
Y^{l+1} = Y^l + \rho^l , \qquad l =0,1,2,\dots ,
\end{align*}
where $\rho^l$ is calculated based on the following relation
\begin{align}
& \overline{\Phi} (Y^l) = (T^{-1}\otimes I_N ) \Phi (Y^l), \label{ad1} \\
& \overline{M} \overline{\rho}^l = - \overline{\Phi} (Y^l), \label{tsNewton} \\
& \overline{\rho}^l = (T^{-1}\otimes I_N ) \rho^l. \label{ad2}
\end{align}
The key point of this idea is that
the linear system \eqref{tsNewton} of size $sN$ consists of $s$ linear systems of size $N$.

Below we consider to what extent the computational cost is reduced.
For both approaches, we equally pay $N^2$ operations to compute $J_0$.

\begin{itemize}
\item Standard iteration based on \eqref{sNewton}.\\
We compute the $LU$ factors of $(I_{sN}-hA\otimes J_0)$,
where a small constant times $(sN)^3$ operations are needed.
Then we solve \eqref{sNewton} iteratively until convergence,
where $\mathcal{O}(s^2N^2)$ operations are needed per each iteration.
\begin{remark}
Note that the cost of the LU factorisation can be further reduced
by making use of complex conjugate eigenvalues pairs~\cite{hw96ii}.
For example, the cost of the LU factorisation for the 2-stage Gauss method
can be reduced from $\mathcal{O}(8N^3)$ to $\mathcal{O}(4N^3)$.
\end{remark}
\item Iteration based on \eqref{tsNewton}.\\
We compute the $LU$ factors of $(I_{N}-h\lambda_1 J_0),\dots,
(I_{N}-h\lambda_s J_0)$,
where only $\mathcal{O}(sN^3)$ operations are needed.
They can be reduced to $\mathcal{O}(N^3)$ in a parallel architecture.
%This is because once $J_0$ is factorized to $J_0=L_0U_0$ with $N^3$ operations,
%$(I_{N}-h\lambda_i J_0)$ is factorized to
%$(I_{N}-h\lambda_i J_0) = L_0 (L_0^{-1}-h\lambda_iU_0)$
%with $N^2$ operations.
Next we solve \eqref{tsNewton} iteratively until convergence,
where only $\mathcal{O}(sN^2)$ operations are needed,
and they can be further reduced to $\mathcal{O}(N^2)$ if parallelism is available.
Note that although we compute  \eqref{ad1} and \eqref{ad2} additionally,
the computational costs of these parts are just $\mathcal{O}(N)$, and thus less important.
\end{itemize}

\subsection{Efficient continuous stage Runge--Kutta methods and their implementation}
We apply the concept of the previous subsection for RK methods with real eigenvalues to CSRK methods.

First, let us express $Y_\tau$ in \eqref{CSRK1} as
\begin{align}
Y_\tau = y_0 l_0 (\tau) + \sum_{i=1}^s Y_{c_i} l_i(\tau)
\end{align}
where $l_i(\tau)$ is defined by
\begin{align}\label{defl}
l_i (\tau) = \prod_{j=0,\ j\neq i}^s \frac{\tau-c_j}{c_i-c_j}
\qquad
(c_0=0),
\qquad
i=0,1,\dots,s.
\end{align}
We regard \eqref{CSRK1} as
a system of nonlinear equations
in terms of $Y_{c_1},\dots,Y_{c_s}$:	
\begin{align} \label{ns1:CSRK}
Y_{c_i}= y_0 + h\int_0^1 A_{c_i,\zeta} f(Y_\zeta)\,\rmd\zeta, \qquad i=1,\dots,s.
\end{align}
Here we merely evaluate \eqref{CSRK1} at the nodes $\tau=c_1,\dots,c_s$.
As this was done for RK methods, the expression \eqref{ns1:CSRK}
is further re-expressed in a form
\begin{align}\label{ns2:CSRK}
\Phi (Y) = Y-e_s\otimes y_0 - h
\begin{bmatrix}
\displaystyle{\int_0^1 A_{c_1,\zeta} f(Y_\zeta)\,\rmd \zeta} \\
\vdots \\
\displaystyle{\int_0^1 A_{c_s,\zeta} f(Y_\zeta)\,\rmd \zeta}
\end{bmatrix}
=0,
\end{align}
where
\begin{align}
Y = 
\begin{bmatrix}
Y_{c_1} \\ \vdots \\ Y_{c_s}
\end{bmatrix}
\in\mathbb{R}^{sN}.
\end{align}
Since the derivation of $\Phi$ with respect to $Y$ becomes
\begin{align}
\Phi^\prime (Y)
=
I_{sN} - h
\begin{bmatrix}
\displaystyle{\int_0^1 A_{c_1,\zeta} J(Y_\zeta) l_1 (\zeta)\,\rmd\zeta}
& \cdots &
\displaystyle{\int_0^1 A_{c_1,\zeta} J(Y_\zeta) l_s (\zeta)\,\rmd\zeta}
\\
\vdots & \ddots & \vdots
\\
\displaystyle{\int_0^1 A_{c_s,\zeta} J(Y_\zeta) l_1 (\zeta)\,\rmd\zeta}
& \cdots &
\displaystyle{\int_0^1 A_{c_s,\zeta} J(Y_\zeta) l_s (\zeta)\,\rmd\zeta}
\end{bmatrix},
\end{align}
the application of the simplified Newton method for solving \eqref{ns2:CSRK}
gives the iteration formula
\begin{align}
\paren*{I_{sN} - h E \otimes J } \rho^l = - \Phi (Y^l), \qquad
Y^{l+1} = Y^l + \rho^l , \qquad l =0,1,\dots ,
\end{align}
where the matrix $E\in\mathbb{R}^{s\times s}$ is defined by
\begin{align}\label{matE}
E_{ij} =  \int_0^1 A_{c_i,\zeta} l_j(\zeta)\,\rmd\zeta.
\end{align}
Therefore, if the matrix $E$ of a CSRK method has only real, distinct eigenvalues,
the CSRK method can be implemented efficiently.
The following theorem indicates that one does not have to be
concerned about the $c_i$ values
when evaluating the eigenvalues of $E$.

\begin{theorem}\label{th:eiE}
The eigenvalues of the matrix $E$ defined in \eqref{matE} are
independent of the $c_i$ values.
\end{theorem}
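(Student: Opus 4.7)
The plan is to exhibit $E$ as the matrix of a single, $c_i$-independent linear operator written in a $c_i$-dependent basis of a fixed finite-dimensional vector space; eigenvalues of an operator are basis-independent, so this gives the result immediately.

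Concretely, I would let $V$ denote the space of polynomials in one variable of degree at most $s$ vanishing at $0$, so $\dim V = s$, and define the linear map $T : V \to V$ by
\begin{align*}
(T p)(\tau) = \int_0^1 A_{\tau,\zeta}\, p(\zeta)\,\rmd\zeta.
\end{align*}
Since $A_{\tau,\zeta}$ is polynomial of degree at most $s$ in $\tau$, we have $T p \in \bbR[\tau]_{\le s}$; and the assumption $A_{0,\zeta}=0$ gives $(Tp)(0)=0$, so $Tp \in V$. Crucially, neither $V$ nor $T$ refers to the nodes $c_i$; they depend only on $A_{\tau,\zeta}$.

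Next I would verify that $\{l_1,\dots,l_s\}$ is a basis of $V$. By the definition \eqref{defl} with $c_0=0$, each $l_j$ vanishes at every node $c_0,c_1,\dots,c_s$ except $c_j$, and in particular $l_j(0)=0$, so $l_j\in V$. Together with $l_0$ these $s+1$ polynomials are the Lagrange basis on the $s+1$ distinct nodes and so are linearly independent, hence the remaining $s$ of them form a basis of the $s$-dimensional space $V$. Then for each $j$, since $T(l_j)\in V$ the Lagrange interpolation formula at the nodes $c_0,c_1,\dots,c_s$ collapses to
\begin{align*}
T(l_j)(\tau)=\sum_{i=1}^s T(l_j)(c_i)\, l_i(\tau)
=\sum_{i=1}^s \left(\int_0^1 A_{c_i,\zeta}\, l_j(\zeta)\,\rmd\zeta\right) l_i(\tau)
=\sum_{i=1}^s E_{ij}\, l_i(\tau).
\end{align*}
Thus $E$ is precisely the matrix of $T$ with respect to $\{l_1,\dots,l_s\}$, and its spectrum equals the spectrum of $T$, which does not depend on the $c_i$.

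I don't anticipate a serious obstacle here; the only thing to be careful about is the basis claim (one must use that $c_0=0$ in \eqref{defl} to get $l_j\in V$ for $j\ge 1$, and the $c_i$ must be distinct so that the $l_j$ are well defined). As a sanity check, computing the matrix of $T$ in the basis $\{\tau,\tau^2/2,\dots,\tau^s/s\}$ gives entries $\sum_{l=1}^s M_{ml}/(l+k)$, a manifestly $c_i$-independent matrix which must therefore be similar to $E$.
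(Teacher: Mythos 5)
Your proof is correct and is essentially the paper's own argument in coordinate-free form: the paper conjugates $E$ by the Vandermonde matrix with entries $c_i^j$, which (via the identity $\sum_{k=1}^s c_k^j l_k(\zeta)=\zeta^j$, valid because $c_0=0$) is exactly the change of basis between your Lagrange basis $\{l_1,\dots,l_s\}$ and the monomial basis $\{\zeta,\dots,\zeta^s\}$ of the same space, landing on the $c_i$-independent matrix $\int_0^1 P_i(\zeta)\zeta^j\,\rmd\zeta$ representing your operator $T$. One small slip in your closing aside: the matrix of $T$ in the basis $\{\tau^k/k\}$ has entries $\tfrac1k\sum_{l}M_{ml}/(l+k)$ (equivalently $\tfrac1m\sum_l M_{ml}/(l+k)$ in the basis $\{\tau^k\}$, which is the paper's \eqref{VEV}); the dropped diagonal factor does change the eigenvalues of the matrix you wrote, but since this is only a sanity check it does not affect the proof.
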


\begin{proof}
We define a Vandermonde matrix
\begin{align*}\def\arraystretch{1.3}
V = \begin{bmatrix}
c_1 & c_1^2 & \cdots & c_1^s \\
c_2 & c_2^2 & \cdots & c_2^s \\
\vdots & \vdots & \ddots & \vdots \\
c_s & c_s^2 & \cdots & c_s^s 
\end{bmatrix}.
\end{align*}
Note that the matrix $V$ is non-singular, if the $c_i$ values are distinct. 

Below, we show that $V^{-1}EV$ is independent of the $c_i$ values.
We express $A_{\tau,\zeta}$ as $A_{\tau,\zeta} = \sum_{i=1}^s \tau^i P_i (\zeta)$
where $P_1,\dots,P_s$ are polynomials in $\zeta$.
Since
$
A_{c_i,\zeta} = \sum_{k=1}^s c_i^k P_k(\zeta)
$,
the matrix $E$ is expressed as
\begin{align*}
E_{ij} = \int_0^1 \sum_{k=1}^s c_i^k  P_k(\zeta) l _j (\zeta)\,\rmd \zeta 
=
\sum_{k=1}^s V_{ik} \int_0^1  P_k(\zeta) l _j (\zeta)\,\rmd \zeta .
\end{align*}
Therefore, it follows that
\begin{align}
(V^{-1}EV)_{ij} 
=\sum_{k=1}^s \int_0^1 P_i(\zeta) l_k(\zeta)\,\rmd \zeta
\cdot c_k^j
=\int_0^1 P_i (\zeta) \zeta^j \,\rmd \zeta.
\label{eq:TET}
\end{align}
In the second equality, the relation
$\sum_{k=1}^s c_k^j l _k(\zeta)= \zeta^j$
is used.
\end{proof}

The expression of $V^{-1}EV$ \eqref{eq:TET} can be
further simplified.
Since 
\begin{align*}
P_i(\zeta) = \frac{1}{i} 
\begin{bmatrix}
M_{i1} & \dots & M_{is}
\end{bmatrix}
\begin{bmatrix}
1 \\ \zeta \\ \vdots \\ \zeta ^{s-1}
\end{bmatrix}
\end{align*}
where the matrix $M$ is defined in \eqref{csrkm}, 
the component of $V^{-1}EV$ can be written as
\begin{align*}
(V^{-1}EV)_{ij} &= \int_0^1 P_i(\zeta)\zeta^j \,\rmd\zeta
=
\frac{1}{i}\int_0^1 
\begin{bmatrix}
M_{i1} & \dots & M_{is}
\end{bmatrix}
\begin{bmatrix}
\zeta^j \\  \vdots \\ \zeta ^{j+s-1}
\end{bmatrix}\,\rmd\zeta \\
&=
\frac{1}{i}
\paren*{\frac{M_{i1}}{j+1} + \dots + \frac{M_{is}}{j+s}}.
\end{align*}
Hence, we have the following formula
\begin{align}\def\arraystretch{1.3}
V^{-1}EV = 
\begin{bmatrix}
1 & & & \\ & \frac{1}{2} & & \\ & & \ddots & \\ & & & \frac{1}{s}
\end{bmatrix}
M
\begin{bmatrix}
\frac{1}{2} & \frac{1}{3} & \cdots & \frac{1}{s+1} \\ 
\frac{1}{3} & \frac{1}{4} & \cdots  & \frac{1}{s+2} \\ 
\vdots & \vdots & \ddots & \vdots \\
\frac{1}{s+1}& \frac{1}{s+2} & \cdots & \frac{1}{2s}
\end{bmatrix}.
\label{VEV}
\end{align}

In the following subsections, we construct efficient fourth and sixth order
energy-preserving integrators 
by making use of the idea presented in this subsection.

\subsection{Three-degree fourth order integrators}
\subsubsection{Derivation}
We derive fourth order energy-preserving integrators.
Since the only fourth order CSRK method with degree two
is the AVF method,
we set the degree of CSRK methods to $s=3$.
We also assume that $B_\zeta = 1$.
According to \autoref{th:order2k}, 
a CSRK method is of order four
if the symmetric matrix $M$ satisfies
\begin{align}\def\arraystretch{1.3}
\begin{bmatrix}
1 & \frac{1}{2} & \frac{1}{3} \\
\frac{1}{2} & \frac{1}{3} & \frac{1}{4} \\
\frac{1}{3} & \frac{1}{4} & \alpha
\end{bmatrix}
M =
\begin{bmatrix}
1 & & \\ & 1 & \\ & & 1
\end{bmatrix}
\end{align}
with a parameter $\alpha \in \bbR$.
When $\alpha \neq 7/36$, $M$ is analytically calculated to be
\begin{align*}\def\arraystretch{1.3}
M = \begin{bmatrix}
\alpha_1 + 4 & -6 \alpha_1 - 6 & 6\alpha_1\\
-6 \alpha_1 - 6 & 36\alpha_1+12 & -36\alpha_1 \\
6\alpha_1  &  -36 \alpha_1 & 36\alpha_1
\end{bmatrix},\qquad
\alpha_1 = \frac{1}{36\alpha -7}.
\end{align*}
The characteristic polynomial of the matrix \eqref{VEV}:
\begin{align*}\def\arraystretch{1.3}
V^{-1}EV = \begin{bmatrix}
1 & & \\ & \frac12 & \\ & & \frac13
\end{bmatrix}
M
\begin{bmatrix}
\frac12 & \frac13 & \frac14 \\
\frac13 & \frac14 & \frac15 \\
\frac14 & \frac15 & \frac16 
\end{bmatrix}
\end{align*}
is found to be
\begin{align*}
\varphi (\lambda )= \lambda^3 - \frac{1}{2}\lambda^2 + \paren*{\frac{1}{12}+\frac{\alpha_1}{300}}
\lambda - \frac{\alpha_1}{600}.
\end{align*}
It follows that $\varphi (\lambda )=0$ has three real, distinct roots
if
\begin{align*}
-\frac{\alpha_1}{300} > \frac{1}{6}2^{2/3} + \frac{5}{24}2^{1/3} + \frac{1}{4}
\approx
0.7770503941.
\end{align*}
Calculating the range of $\alpha$ is easy, and 
exactly the same evaluation was done by Butcher--Imran~\cite{bu13}
in the context of symplectic methods.

\subsubsection{Long time behaviour}
Since the new method surely preserves the energy,
one may expect a good long-time behaviour.
As a criterion of the long-time behaviour,
we here consider the conjugate-symplecticity~\cite{ha06}.

A one-step method $\Phi_h$ of order $p$ is said to be
conjugate-symplectic up to order $p+r$ ($r\geq 0$), if
there exists such
a change of coordinates $z=\chi (y)$
that is $\calO (h^p)$ close to the identity and
$\Psi_h = \chi \circ \Phi_h \circ \chi^{-1}$ satisfies
\begin{align*}
\Psi_h^\prime (z)^\top J \Psi_h^\prime (z) = J + \calO (h^{p+r+1}).
\end{align*}
The modified equation of such a method is Hamiltonian
up to the terms of size $\calO(h^{p+r})$,
and thus the behaviour of the method is like that of a symplectic integrator
on intervals of length $\calO(h^{-r})$.
See~\cite[Chapter~VI.8]{ha06} for more details.

In~\cite{ha10,ha13}, it is proved that
the AVF collocation method of order $4$ ($2s$ in general)
is conjugate-symplectic up to order $6$ ($2s+2$),
but not conjugate-symplectic up to a higher-order.
It follows from Theorem~5.11 in Hairer--Zbinden~\cite{ha13} that
the new method (of order $4$) is conjugate-symplectic up to order $6$
but not conjugate-symplectic up to a higher-order, independently of the choice of $\alpha$.
Therefore, one could expect that
the new fourth-order method behaves like the fourth-order AVF collocation method
in terms of conjugate-symplecticity.

\subsubsection{Efficiency}
Although the computational cost in each time step of the derived integrators
is reduced to about one-eighth
(or one-fourth if the AVF collocation method is implemented by making use of 
complex conjugate eigenvalues pairs),
the actual local truncation error of the new integrators
is worse than the AVF collocation method.
Therefore,
we here carefully consider the efficiency of the proposed integrators
by taking both truncation errors and computational costs into consideration.
Below, we denote $-\alpha_1/300$ by $\theta$.

\autoref{table:cb5} shows coefficients of the elementary differentials with trees of order 5
for the B-series expansions of the exact solution, the two-degree fourth-order AVF collocation method
and the proposed method.
It is estimated that when we use the same stepsize the error of the proposed method
is approximately $(60\theta+1)$ times bigger than that of the fourth-order
AVF collocation method.
On the other hand, for large problems,
the computation of the proposed method is 8 (or 4) times faster than 
the AVF collocation method
per each time step.
Thus it is fair to
compare the local errors of the AVF collocation method with the stepsize $8h$
and the proposed method with stepsize $h$ at $t=t_0+8h$
so that the overall computational costs are almost the same.
The former is denoted by $err_{\text{AVF}}$ and the latter by $err_{\text{New}}$.
Then the ratio of the errors at $t=t_0+8h$  is roughly estimated to be
\begin{align}
\frac{8\cdot err_{\text{New}}}{ err_{\text{AVF}}} 
\approx \frac{8\cdot (60\theta+1) h^5}{(8h)^5} = 
\frac{60\theta+1}{4096}.
\end{align}
Therefore, the proposed method is more efficient than the fourth-order AVF collocation method
(in the sense that the above ratio is less than $1$)
if 
\begin{align*}
0.7770503941 < \theta < \frac{273}{4} = 68.25.
\end{align*}
Note that if we consider the new method is only 4 times faster than the 
AVF collocation method,
this range of $\theta$ changes to
\begin{align*}
0.7770503941 < \theta < \frac{17}{4} = 4.25,
\end{align*}
and thus the proposed method is still efficient.

\begin{table}[htbp]
\caption{Coefficients of the elementary differentials with trees of order 5
for the B-series expansions of the exact solution, the fourth-order AVF collocation method
and the proposed method.}
\label{table:cb5}
\centering
{\tiny
{\renewcommand\arraystretch{2}
\begin{tabular}{cccccccccc}
\hline 
 $t$ & \tix & \tx  & \txi 
 & \txii & \txiii & \txiv 
 & \txv & \txvi & \txvii  \\ 
exact solution & $\frac{1}{5}$ & $\frac{1}{10}$ & $\frac{1}{15}$ 
& $\frac{1}{30}$ & $\frac{1}{20}$ & $\frac{1}{20}$ 
&$\frac{1}{40}$ &$\frac{1}{60}$ &$\frac{1}{120}$ \\ 
\hline
AVF collocation & $\frac{1}{5}$ & $\frac{1}{10}$ & $\frac{5}{72}$ 
& $\frac{5}{144}$ & $\frac{1}{20}$ & $\frac{1}{20}$ 
&$\frac{1}{40}$ &$\frac{1}{72}$ &$\frac{1}{144}$ \\
AVF collocation $-$ exact & $0$ & $0$ & $\frac{1}{360}$ 
& $\frac{1}{720}$ & $0$ & $0$ 
&$0$ &$-\frac{1}{360}$ &$-\frac{1}{720}$ \\
\hline
Proposed method & $\frac{1}{5}$ & $\frac{1}{10}$ & $\frac{12\theta+5}{72}$ 
& $\frac{12\theta+5}{144}$ & $\frac{1}{20}$ & $\frac{1}{20}$ 
&$\frac{1}{40}$ &$\frac{1-12\theta}{72}$ &$\frac{1-12\theta}{144}$ \\
Proposed method $-$ exact & $0$ & $0$ & $\frac{60\theta+1}{360}$ 
& $\frac{60\theta+1}{720}$ & $0$ & $0$ 
&$0$ &$-\frac{60\theta+1}{360}$ &$-\frac{60\theta+1}{720}$ \\
\hline
\end{tabular} 
}}
\end{table}

\subsection{Five-degree Sixth order integrators}
We next consider the derivation of five-degree sixth order integrators.
Since it turned out that
intended integrators cannot be constructed with $s=4$,
we here set $s=5$.
We also assume that $B_\zeta = 1$.
A CSRK method is of order at least six
if the symmetric matrix $M$ satisfies
\begin{align}\def\arraystretch{1.3}
\begin{bmatrix}
1 & \frac{1}{2} & \frac{1}{3} & \frac{1}{4} & \frac{1}{5} \\
\frac{1}{2} & \frac{1}{3} & \frac{1}{4} & \frac{1}{5} & \frac{1}{6} \\
\frac{1}{3} & \frac{1}{4} & \frac{1}{5} & \frac{1}{6} & \frac{1}{7} \\
\frac{1}{4} & \frac{1}{5} & \frac{1}{6} & \alpha & \beta \\
\frac{1}{5} & \frac{1}{6} & \frac{1}{7} & \beta & \gamma
\end{bmatrix}
M =
\begin{bmatrix}
1 & & & &\\ & 1 & & & \\ & & 1 & & \\ & & & 1 & \\ & & & & 1
\end{bmatrix}
\end{align}
with parameters $\alpha,\beta,\gamma \in \bbR$.
Recall that in the derivation of the three-degree fourth order integrators,
it was easy to evaluate eigenvalues of the matrix \eqref{VEV},
because there is only one free parameter.
However, we find it increasingly difficult to do a similar estimate
for sixth order integrators:
the application of the Sturm theorem is too complicated 
due to three free parameters.
Here, we only show that there are choices of parameters such that 
the matrix \eqref{VEV} has only real, distinct eigenvalues,
by giving a concrete choice.

It is difficult to express the matrix $M$
analytically in terms of the parameters $\alpha$, $\beta$ and $\gamma$.
Thus, instead of requiring all eigenvalues of \eqref{VEV} to be real,
we require real eigenvalues of 
\begin{align*}\def\arraystretch{1.3} N=
\begin{bmatrix}
1 & & & &  \\ & 2 & & & \\ & & 3 &  & \\ & & & 4 & \\ & & & & 5
\end{bmatrix}
\begin{bmatrix}
\frac{1}{2} & \frac{1}{3} & \frac{1}{4}& \frac{1}{5} & \frac{1}{6} \\ 
\frac{1}{3} & \frac{1}{4} & \frac{1}{5}& \frac{1}{6} & \frac{1}{7} \\ 
\frac{1}{4} & \frac{1}{5} & \frac{1}{6}& \frac{1}{7} & \frac{1}{8} \\ 
\frac{1}{5} & \frac{1}{6} & \frac{1}{7}& \frac{1}{8} & \frac{1}{9} \\ 
\frac{1}{6} & \frac{1}{7} & \frac{1}{8}& \frac{1}{9} & \frac{1}{10} 
\end{bmatrix}^{-1}M^{-1}.
\end{align*}

We write 
\begin{align*}
\alpha = \frac{1}{7}+\alpha^\ast, \qquad \beta = \frac{1}{8} + \beta^\ast, \qquad
\gamma = \frac{1}{9} + \gamma^\ast.
\end{align*}
Let $\varphi (\lambda) = \det (\lambda I - N )$.
It is possible to find a parametric solution to the problem
$\varphi (0) = \varphi^\prime (0) = 0$
in terms of $\alpha^\ast$, $\beta^\ast$ and $\gamma^\ast$.
One way of doing this is explained in Step 1 below.
Here are three steps that need to be carried out to find a real root solution to $\varphi$.

\begin{enumerate}
\item
We choose the following parameters
\begin{align*}
\alpha^\ast = -\frac{1+7a^2}{2800}, \qquad
\beta^\ast = -\frac{3+ac}{4200}, \qquad
\gamma^\ast = -\frac{64+c^2}{44100}
\end{align*}
where $a$ and $c$ are real or purely imaginary numbers,
so that the first and second degree terms in $\varphi(\lambda)$ are both 0.
\item
It is found that
\begin{align*}
\varphi(\lambda) = \lambda^2  \paren*{\lambda^3 + (\delta-2)\lambda^2
-6\delta\lambda + 12\delta}
\end{align*}
where $\delta = 18(21a-c)^2-10$.
This polynomial has three real roots for $\delta \in [0,0.2144863035]$
where the upper bound is the real zero of 
\begin{align*}
\delta^3 + 24\delta^2 + 144\delta -32 = 0.
\end{align*}
We choose the parameters $a$ and $c$ so that $\delta \in [0,0.2144863035]$.
A possible choice is $(a,c,\delta) = (\frac{1}{12},1,\frac{1}{8})$.
\item
We add small perturbation to $\alpha$ or $\beta$ or $\gamma$
in the correct direction so that the two zero roots remain real.
For example, reducing $\gamma$ by $10^{-10}$ gives the real roots
\begin{align*}
-0.8831274189, \ -0.06591290801, \ 0.06591290335, \ 0.9185056830,\  1.839542361
\end{align*}
\end{enumerate}

\begin{remark}
Although 
the five-degree sixth order integrator has been derived above,
it is still difficult to give a necessary and sufficient condition for 
the parameters $\alpha$, $\beta$ and $\gamma$
such that the matrix \eqref{VEV} has only real eigenvalues.
Therefore, we here leave the discussion about the efficiency untouched.
\end{remark}

\section{Concluding remarks}
\label{sec6}
In this paper,
we have shown the energy-preserving condition of CSRK methods for Hamiltonian systems,
characterized the order conditions and parallelizable condition in terms of the matrix $M$.
As applications, we have derived new fourth and sixth order integrators
which can be implemented more efficiently 
than the standard method, i.e. the AVF collocation method.
In the derivation, the eigenvalues of the matrix $E$ played an important role.

We note several directions for future work.
From a theoretical viewpoint, it is hoped that the conjecture discussed in \autoref{rem:con}
will be solved.
Considering other applications of the energy-preserving condition of CSRK methods
would be interesting.
We are currently trying to give a more systematic approach to 
deriving higher order efficient energy-preserving integrators.

\section*{Acknowledgments}
The authors are grateful for various comments by anonymous referees.
The first author was supported by the Research Fellowship of the Japan Society for the Promotion of Science for Young Scientists.
The second author was supported by the Marsden Fund of New Zealand.

\end{document}